\documentclass{article}
\usepackage[utf8]{inputenc}
\usepackage{multirow}
\usepackage{amsmath}
\usepackage{amsfonts}
\usepackage{amssymb}
\usepackage{graphicx}
\usepackage{subcaption}
\usepackage[export]{adjustbox}
\usepackage{wrapfig}
\usepackage{amsthm}
\usepackage{mathrsfs}
\usepackage{biblatex} %Imports biblatex package
\addbibresource{refs.bib}
\usepackage{tikz}

\usepackage[margin=1.5in]{geometry}
%\addtolength{\oddsidemargin}{-.875in}
%\addtolength{\evensidemargin}{-.875in}
%\addtolength{\textwidth}{1.75in}
 
\newtheorem{thm}{Theorem}

\newtheorem{cor}{Corollary}
\newtheorem*{cor*}{Corollary}
\newtheorem{lem}{Lemma}
\newtheorem{mydef}{Definition}

\newtheorem{prop}{Proposition}

\usepackage{quoting,xparse}

\NewDocumentEnvironment{pquotation}{m}
  {\begin{quoting}[
     indentfirst=true,
     leftmargin=\parindent,
     rightmargin=\parindent]\itshape}
  {\bywhom{#1}\end{quoting}}

\title{
{Family of Approximations for Dirichlet $L$-functions}\\
}
\author{Mohammed Alzergani}

\begin{document}

\maketitle

\maketitle
\begin{abstract}
    We introduce an infinite family of approximations for a Dirichlet $L$-function $L(s, \chi)$ arising from truncated Euler products. These approximations are entire functions and satisfy the same functional equation as $L(s, \chi)$. We provide numerical evidence of the accuracy of estimating values of $L(s, \chi)$ in the critical strip using these approximations. We then provide a precise expression for the error of the approximations and show that the error has exponential decay in largest prime considered in the truncated Euler product.
\end{abstract}

\section{Introduction}

The Riemann zeta function and Dirichlet $L$-functions are an important part of modern analytic number theory. There is a strong relationship between the zeros of the Riemann zeta function and Dirichlet $L$-functions and prime numbers (as is explored more in "Multiplicative Number Theory" by Davenport \cite{davenport}). There is much research regarding approximations and properties of the Riemann zeta function and Dirichlet $L$-functions \cite{matiy2} \cite{Gonek1} \cite{Gonek2} \cite{Gonek3}.

In 2017, Matiyasevich \cite{matiy1} constructed a family of approximations for the Riemann zeta function, and in 2022, M. Nastasescu and A. Zaharescu \cite{nastasescu} answered some questions that Matiyasevich raised in his 2017 paper. Notably, they provide a bound for the error of the approximations that Matiyasevich constructed. Other families of approximations for the Riemann zeta function were introduced by Gonek, Hughes, and Keating \cite{Gonek2} and Gonek \cite{Gonek1}, the first introducing a hybrid approximation and the second introducing one arising from a finite Euler product.

In his 2017 paper, Matiyasevich \cite{matiy1} states: 
\begin{quotation}{}
``Numerical examples show that this trick can work at least in the case of the Riemann’s zeta function (considered in this paper) as well as in the case of the \textit{Dirichlet L-functions} and \textit{Ramanujan L-function} (to be considered in subsequent papers), but the author has not established any rigorous estimates". 
\end{quotation}

That is precisely what this paper sets out to accomplish for the Dirichlet $L$-functions. We construct a similar family of approximations for Dirichlet $L$-functions to the one introduced by Matiyasevich for the Riemann zeta function, then we employ complex analytic methods to find and bound the error of our approximations.

In Section 1 we provide numerical data for our approximations' strength. In Section 2 we construct our family of approximations arising from truncated Euler products. In Section 3.1, we bound the principal part of the truncated Euler product, then in Section 3.2 we find an integral formula for the error, which in Section 3.3 we then express in terms of incomplete gamma functions. Finally, in Section 3.4 we show that our family converges uniformly and absolutely to the Dirichlet L-function by bounding the error in the limit when the number of primes considered approaches infinity. While we use similar complex analytic methods as used in \cite{nastasescu} for the Riemann zeta functions, we give a more precise expression for the error of the approximations. \\

Let $s_0$ be a fixed arbitrary complex number, $\chi$ be a primitive character modulo $q$, $\kappa(\chi)$ be $0$ if $\chi$ is an even character and $1$ if $\chi$ is odd, and $\epsilon(\chi)$ be the Gauss sum of $\chi$ divided by $i^\kappa \sqrt{q}$.

Furthermore let $L_u(s,\chi)$ be the truncated Euler product over primes less than or equal to $u$, and let $L_u^\approx(s,\chi)$ be our holomorphic approximation of $L(s,\chi)$ that satisfies the functional equation of $L(s, \chi)$.
Likewise let $\xi_u(s,\chi)$ be the completed version of $L_u(s,\chi)$, with $$\xi_u(s,\chi) = \left(\frac{q}{\pi}\right)^{\frac{s+\kappa(\chi)}{2}} \Gamma \left(\frac{s+\kappa(\chi)}{2}\right) L_u(s,\chi)$$ and let $\xi_u^\approx(s,\chi)$ be the completed version of $L_u^\approx(s,\chi)$.
 
We find the error for the completed $L$-function to be
\begin{equation*}
    \xi(s_0, \chi) - \xi^{\approx}_u(s_0,\chi) = \frac{1}{2 \pi i} \int_{Re(s) = \sigma} g(s,\chi) \left( \frac{L(s,\chi) - L_u(s,\chi)}{s - s_0} + \epsilon(\chi) \frac{ L(s,\overline{\chi}) - L_u(s,\overline{\chi})}{s - (1 - s_0)} \right) ds
\end{equation*}

and we find the following bound for the error

\begin{equation*}
    |\xi(s_0, \chi) - \xi^{\approx}_u(s_0,\chi)| \ll \begin{cases}
        \left(\frac{q}{\pi}\right) \Gamma \left(0, \frac{\pi u^2}{q}\right) & \text{if } \kappa(\chi) = 1\\
        \sqrt{\frac{q}{\pi}}\Gamma\left( -\frac{1}{2}, \frac{\pi u^2}{q} \right) & \text{if } \kappa(\chi) = 0.
    \end{cases}
\end{equation*} 
with $\Gamma$ being the incomplete Gamma function (see Definition \eqref{def} or p. 908, 910 of \textit{Table of Integrals, Series, and Products} \cite{integrals}).\\

We've also computed the values of the $L$-functions as well as the errors obtained by our approximations $L^\approx_u$ for three different primitive characters in Tables \ref{tab:1}, \ref{tab:2}, and \ref{tab:3}. This provides some numerical evidence that support the strength of our approximations along with the way the approximations quickly get stronger with additional prime factors. \\

For example, we find for $L(\frac{1}{2}+8i, \chi_{5,4})$ that the approximation $L^\approx_5(\frac{1}{2}+8i, \chi_{5,4})$ has an error on the order of $10^{-13}$ while $L^\approx_7(\frac{1}{2}+8i, \chi_{5,4})$ obtained by adding one more factor has an error on the order of $10^{-33}$.

%5,3/7,5/8,2/9,3 in Mathematica
\begin{table}[h!]
    \centering
    \begin{tabular}{| p{2cm}|p{2.2cm}|p{2.2cm}|p{2.2cm}|p{2.2cm}|p{2.2cm}  |}

    \hline
     \multicolumn{6}{|c|} {Table of values}\\
         \hline
         & $s_0=1/2 + 8i$&$s_0=1/2 + 9i$&$s_0=1/2 + 10i$&$s_0=1/2+11i$&$s_0=1/2+12i$\\
         \hline
         $L(s_0,\chi_{5,4}) $ & $1.59 + 0.20i$    &$0.83 - 0.90i$&   $0.07 + 0.26 i$ & $1.11 + 0.27 i$ & -$0.06 + 0.07i$\\ \hline 
         $|L(s_0,\chi_{5,4}) -  L^\approx_5(s_0,\chi_{5,4})|$&   $7.16 \cdot 10^{-13}$  & $1.61 \cdot 10^{-12}$   &$3.61 \cdot 10^{-12}$ & $8.07 \cdot 10^{-12}$ & $1.80 \cdot 10^{-11}$\\ \hline 
        $|L(s_0,\chi_{5,4}) -  L^\approx_7(s_0,\chi_{5,4})|$ &   $6.70 \cdot 10^{-33}$  & $1.51 \cdot 10^{-32}$   &$3.41 \cdot 10^{-32}$ & $7.64 \cdot 10^{-32}$ & $1.71 \cdot 10^{-31}$\\ \hline 
         $|L(s_0,\chi_{5,4}) -  L^\approx_{11}(s_0,\chi_{5,4})|$ &   $3.85 \cdot 10^{-46}$  & $8.68 \cdot 10^{-46}$   &$1.95 \cdot 10^{-45}$ & $4.39 \cdot 10^{-45}$ & $9.83 \cdot 10^{-45}$\\
         \hline
    \end{tabular}
    \caption{For the Dirichlet character represented as $\chi_5(4,\cdot)$ on the $L$-functions and modular forms database (LMFDB) \cite{lmfdb}. Note that $\chi_{5,4}$ is a primitive character with conductor $5$.}
    \label{tab:1}
\end{table}

\begin{table}[h!]
    \centering
    \begin{tabular}{| p{2cm}|p{2.2cm}|p{2.2cm}|p{2.2cm}|p{2.2cm}|p{2.2cm}  |}

    \hline
     \multicolumn{6}{|c|} {Table of values}\\
         \hline
         & $s_0=1/2 + 8i$&$s_0=1/2 + 9i$&$s_0=1/2 + 10i$&$s_0=1/2+11i$&$s_0=1/2+12i$\\
         \hline
         $L(s_0,\chi_{7,6}) $ & $1.16 + 2.51i$    &$4.19 + 0.06i$&   $1.70 - 2.72i$ & $-0.24-0.23 i$ & $0.50 - 0.30i$\\ \hline 
         $|L(s_0,\chi_{7,6}) -  L^\approx_5(s_0,\chi_{7,6})|$ &   $8.66 \cdot 10^{-23}$  & $1.84 \cdot 10^{-22}$   &$3.93 \cdot 10^{-22}$ & $8.40 \cdot 10^{-22}$ & $1.80 \cdot 10^{-21}$\\ \hline 
         $|L(s_0,\chi_{7,6}) -  L^\approx_7(s_0,\chi_{7,6})|$ &   $8.66 \cdot 10^{-23}$  & $1.84 \cdot 10^{-22}$   &$3.93 \cdot 10^{-22}$ & $8.40 \cdot 10^{-22}$ & $1.80 \cdot 10^{-21}$\\ \hline 
         $|L(s_0,\chi_{7,6}) -  L^\approx_{11}(s_0,\chi_{7,6})|$ &   $2.24 \cdot 10^{-32}$  & $6.90 \cdot 10^{-32}$   &$1.47 \cdot 10^{-31}$ & $3.15 \cdot 10^{-31}$ & $6.76 \cdot 10^{-31}$\\
         \hline
    \end{tabular}
    \caption{For the Dirichlet character represented as $\chi_7(6,\cdot)$ on the $L$-functions and modular forms database (LMFDB) \cite{lmfdb}. Note that $\chi_{7,6}$ is a primitive character with conductor $7$.}
    \label{tab:2}
\end{table}

\begin{table}[h!]
    \centering
    \begin{tabular}{| p{2cm}|p{2.2cm}|p{2.2cm}|p{2.2cm}|p{2.2cm}|p{2.2cm}  |}

    \hline
     \multicolumn{6}{|c|} {Table of values}\\
         \hline
         & $s_0=1/2 + 8i$&$s_0=1/2 + 9i$&$s_0=1/2 + 10i$&$s_0=1/2+11i$&$s_0=1/2+12i$\\
         \hline
         
         $L(s_0,\chi_{8,5}) $ & $0.16 + 0.89i$    &$2.61 + 0.53i$&   $0.93 - 1.62i$ & $0.26 + 0.27 i$ & $0.51 - 0.31i$\\ \hline 
         $|L(s_0,\chi_{8,5}) -  L^\approx_5(s_0,\chi_{8,5})|$ &   $1.02 \cdot 10^{-7}$  & $2.28 \cdot 10^{-7}$   &$5.08 \cdot 10^{-7}$ & $1.13 \cdot 10^{-6}$ & $2.50 \cdot 10^{-6}$\\ \hline 
         $|L(s_0,\chi_{8,5}) -  L^\approx_7(s_0,\chi_{8,5})|$ &   $2.28 \cdot 10^{-20}$  & $5.13 \cdot 10^{-20}$   &$1.15 \cdot 10^{-19}$ & $2.59 \cdot 10^{-19}$ & $5.78 \cdot 10^{-19}$\\ \hline
         $|L(s_0,\chi_{8,5}) -  L^\approx_{11}(s_0,\chi_{8,5})|$ &   $1.07 \cdot 10^{-28}$  & $2.41 \cdot 10^{-28}$   &$5.43 \cdot 10^{-28}$ & $1.22 \cdot 10^{-27}$ & $2.73 \cdot 10^{-27}$\\
         \hline
    \end{tabular}
    \caption{For the Dirichlet character represented as $\chi_8(5,\cdot)$ on the $L$-functions and modular forms database (LMFDB) \cite{lmfdb}. Note that all $\chi_{8,5}$ is a primitive character with conductor $8$.}
    \label{tab:3}
\end{table}

%Some prior information and research, ex: "we believe it satisfies the general riemann hypothesis". Background for why people should care about this

%Summary of the thesis, state main results

\newpage
\section{A Family of Approximations for Dirichlet L-functions}

%Explain a bit of the origins of the approximation, taking the holomorphic part

Throughout the paper we will let $\chi$ be a fixed primitive Dirichlet character of conductor $q$. Consider 
\begin{equation*}
  g(s,\chi) = \left(\frac{q}{\pi}\right)^{\frac{s+\kappa(\chi)}{2}} \Gamma \left(\frac{s+\kappa(\chi)}{2}\right)
\end{equation*}
with 
\begin{equation*}
\kappa(\chi) = \begin{cases}
    0 & \chi(-1) = 1 \\
    1 & \chi(-1) = -1.
\end{cases}
\end{equation*}

We define a truncated Euler product $L_u (s, \chi)$ of $L(s, \chi)$ as
\begin{equation*}
    L_u(s,\chi) = \prod_{\text{prime } p \le u} \frac{1}{1-\frac{\chi(p)}{p^s}},
\end{equation*} 
where the product is over primes less than or equal to $u$. We also introduce the completed analog $\xi_u(s, \chi)$ of $L_u (s, \chi)$ as
\begin{equation*}
 \xi_u(s,\chi) = g(s,\chi) L_u(s,\chi).    
\end{equation*}

For future convenience, we introduce the modified truncated Euler product $L_{u}^{\setminus\{p_j\}} (s, \chi)$ with a single prime $p_j \leq u$ removed
\begin{align*}
    %Euler Products Without a Term
    L_{u}^{\setminus\{p_j\}} (s, \chi) =  \prod_{\substack{ p \le u \\ p \neq p_j}} \frac{1}{1-\frac{\chi(p)}{p^s}}, 
\end{align*}
as well as its completed analog
\begin{equation*}
\xi_{u}^{\setminus\{p_j\}} = g(s,\chi)  L_{u}^{\setminus\{p_j\}} (s, \chi).    
\end{equation*}

Our goal is to introduce a family of approximations $\xi_u^\approx (s, \chi)$ of $\xi(s, \chi)$ arising from the truncated Euler product $\xi_u (s, \chi)$ introduced above such that each $\xi_u ^\approx (s, \chi)$ is an entire function and satisfies the same functional equation 
\begin{equation*}
\xi_u^\approx(s,\chi) = \epsilon(\chi)\xi_u^\approx(1-s,\overline{\chi})
\end{equation*}
as $\xi(s, \chi)$. In order to do this, we need to first find the principal part $\xi_u^{pp} (s, \chi)$ of $\xi_u (s, \chi)$. 

\begin{lem}
    Given $\xi_u (s,\chi)$ as above, its principal part is given by
\begin{align*}
    %Principal Part
    \xi^{pp}_{u}(s,\chi) &= \sum_{\substack{j=1 \\ p_j \nmid q}}^{m} \ 
    \sum_{\substack{h\in\mathbb{Z} \\ h \neq \frac{-n_j}{\varphi (q)}}} 
    \frac{\xi_{u}^{\setminus\{p_j\}}\left(\frac{2\pi i}{\log (p_j)}\left(\frac{n_j}{\varphi (q)} + h\right),\chi\right)}{(s-\frac{2\pi i}{\log (p_j)}(\frac{n_j}{\varphi (q)}+h))\log(p_j)} \\
    &+\sum_{h\in\mathbb{Z}_+}\frac{2(-1)^h\pi^h L_u(-2h-\kappa(\chi),\chi)}{q^h\Gamma(h+1)(s+2h+\kappa(\chi))} \\
    &+ \text{principal part from s=0}.
\end{align*}
\end{lem}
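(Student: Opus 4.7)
The plan is to enumerate every pole of $\xi_u(s,\chi) = g(s,\chi)\,L_u(s,\chi)$, verify that each is simple and that they are mutually distinct away from $s=0$, compute the residues, and then form the principal part as the sum of the corresponding $\text{residue}/(s-\text{pole})$ terms. Poles can only come from two sources: the gamma factor $\Gamma((s+\kappa(\chi))/2)$ inside $g(s,\chi)$, and the individual Euler factors $(1-\chi(p_j)/p_j^s)^{-1}$ for primes $p_j \le u$ with $p_j \nmid q$ (if $p_j \mid q$ the factor collapses to $1$, since $\chi(p_j)=0$).

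For the Euler factor poles, since $\chi(p_j)$ is a $\varphi(q)$-th root of unity I would write $\chi(p_j) = e^{2\pi i n_j/\varphi(q)}$. The equation $\chi(p_j) p_j^{-s} = 1$ is solved exactly by $s_{j,h} = \frac{2\pi i}{\log p_j}\bigl(\frac{n_j}{\varphi(q)} + h\bigr)$ for $h\in\mathbb{Z}$. Differentiating the denominator $1-\chi(p_j)p_j^{-s}$ yields $\chi(p_j)p_j^{-s}\log p_j$, which equals $\log p_j$ at any such pole, so the isolated Euler factor has simple residue $1/\log p_j$. Multiplying by the regular factor $g(s,\chi)\,L_u^{\setminus\{p_j\}}(s,\chi)$ evaluated at $s_{j,h}$ gives a residue of $\xi_u^{\setminus\{p_j\}}(s_{j,h},\chi)/\log p_j$, and forming $\text{residue}/(s-s_{j,h})$ and summing over all pairs with $s_{j,h}\neq 0$ (equivalently $h\neq -n_j/\varphi(q)$) reproduces the first double sum in the statement. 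For the gamma poles, $\Gamma((s+\kappa)/2)$ has simple poles at $s=-2h-\kappa(\chi)$ for nonnegative integers $h$ with residue $2(-1)^h/h!$ (the factor of $2$ from the chain rule), and evaluating $(q/\pi)^{(s+\kappa)/2}\,L_u(s,\chi)$ at these points and dividing by $s+2h+\kappa$ yields the second sum for $h\ge 1$; the $h=0$ contribution lies at $s=-\kappa(\chi)$, which is bundled into the $s=0$ term for even characters and otherwise contributes an isolated simple pole at $s=-1$ of the same shape as the $h\ge 1$ entries.

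The main obstacle, and the step I expect to require the most care, is verifying that away from $s=0$ all of these poles are simple and no two coincide, so that the principal parts may be summed naively. For two Euler poles to coincide with $j\neq k$ one would need $\frac{n_j/\varphi(q)+h}{\log p_j} = \frac{n_k/\varphi(q)+h'}{\log p_k}$, which is a $\mathbb{Q}$-linear relation between $\log p_j$ and $\log p_k$; because $\log p_j/\log p_k$ is irrational for distinct primes (a direct consequence of unique factorization), both coefficients must vanish, forcing $s_{j,h}=s_{k,h'}=0$. The nonzero Euler poles are purely imaginary whereas the gamma poles lie on the nonpositive real axis, so these two families can only meet at $s=0$. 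Hence every pole other than $s=0$ is simple and uniquely accounted for; summing the simple-pole principal parts gives exactly the formula in the statement, with the one genuine coincidence at the origin collected into the final ``principal part from $s=0$'' term.
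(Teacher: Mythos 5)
Your proof is correct and follows essentially the same route as the paper: enumerate the poles of $\xi_u(s,\chi)$ coming from the gamma factor and from each Euler factor, verify they are simple, compute the residues, and sum $\text{Res}(\rho)/(s-\rho)$. You are in fact a bit more careful than the paper in two spots: you explicitly verify that no two nonzero poles can coincide (via the $\mathbb{Q}$-linear independence of $\log p_j$ and $\log p_k$, plus the observation that the Euler poles are purely imaginary while the gamma poles are on the nonpositive real axis), and you flag that the $h=0$ gamma pole sits at $s=-\kappa(\chi)$, which is at the origin only for even characters; for odd characters this produces an isolated pole at $s=-1$ that must either be folded into the $h$-sum or stated separately, a point the paper's indexing over $h\in\mathbb{Z}_+$ leaves ambiguous.
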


\begin{proof}
    With the possible exception of the pole at $s=0$, every pole of $\xi_u(s, \chi)$ is a simple one, and so the principal part is the sum 
\begin{equation*}
\sum_{s_0} \frac{Res(s_0)}{s-s_0} 
\end{equation*}
over all the poles $s_0$ of $\xi_u(s,\chi)$ plus a term from the pole at $s=0$.

The function $\Gamma(n)$ has simple poles at each $n \in \mathbb{Z}_{\le 0}$ with residue 
\begin{equation*}
\frac{(-1)^{-n}}{\Gamma(-n+1)}.
\end{equation*}
Therefore when $s=-\kappa-2h$ with $h \in \mathbb{Z}_{\ge 0}$, the residue of $\xi_u(s,\chi)$ is 
\begin{equation*}
    \frac{2(-1)^h}{\Gamma(h+1)}\left(\frac{q}{\pi}\right)^{-h} L_u(-2h-\kappa(\chi),\chi).
\end{equation*}
$L_u(s,\chi)$ has simple poles when a term in the Euler product has denominator 0. This occurs when, for the $j$-th prime $p_j<u$,
\begin{equation*}
    \frac{\chi(p_j)}{{p_j}^s}=1.
\end{equation*}
Note that there is no pole contributed by the j-th term whenever $p_j | q$.
Since $\chi(p_j)$ is a $\varphi(q)$-th root of unity, there is an integer $n_j$ between 0 and $\varphi(q)$ that satisfies 
\begin{equation*}
    \chi(p_j) = \exp{\left(2 \pi i \left( \frac{n_j}{\varphi(q)} + h \right)\right)}
\end{equation*}
for every $h \in \mathbb{Z}$. Hence the poles $s$ of $L_u(s, \chi)$ are at points
\begin{equation*}
    s = \frac{2 \pi i}{\log(p_j)} \left( \frac{n_j}{\varphi(q)} + h \right).
\end{equation*}
The residue of $\xi_u$ at these poles is therefore
\begin{equation*}
    \frac{\xi_{u}^{\setminus\{p_j\}}\left(\frac{2\pi i}{\log (p_j)}\left(\frac{n_j}{\varphi (q)} + h\right),\chi\right)}{\log(p_j)}.
\end{equation*}

\end{proof}

To complete our approximation, we get the holomorphic part of the Laurent expansion 
\begin{align*}
    \xi^{reg}_{u}(s,\chi) &= \xi_{u}(s,\chi) - \xi^{pp}_{u}(s,\chi),
\end{align*}
and take into account the functional equation to give us our approximation $\xi_u^\approx$ defined by
\begin{equation*}
    \xi^{\approx}_{u}(s,\chi) = \xi^{reg}_{u}(s,\chi) + \epsilon(\chi) \xi^{reg}_{u}(1-s,\overline{\chi})
\end{equation*}
where
\begin{equation*}
    \epsilon(\chi) = \frac{\tau(\chi)}{i^\kappa \sqrt{q}} = \frac{\sum_{n=1}^q \chi(n)e^{2\pi i n/q}}{i^\kappa \sqrt{q}}.
\end{equation*}

\begin{lem}
    $\xi_u^\approx (s, \chi)$ satisfies the same functional equation of $\xi_u(s,\chi)$. 
\end{lem}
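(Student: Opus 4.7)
The plan is to verify the functional equation by direct substitution into the defining formula
\[
\xi^{\approx}_{u}(s,\chi) = \xi^{reg}_{u}(s,\chi) + \epsilon(\chi)\,\xi^{reg}_{u}(1-s,\overline{\chi}),
\]
and reduce the claim to the classical identity $\epsilon(\chi)\epsilon(\overline{\chi}) = 1$. Concretely, first I would replace $(s,\chi)$ with $(1-s,\overline{\chi})$ in the definition to obtain
\[
\xi^{\approx}_{u}(1-s,\overline{\chi}) = \xi^{reg}_{u}(1-s,\overline{\chi}) + \epsilon(\overline{\chi})\,\xi^{reg}_{u}(s,\chi).
\]
Multiplying through by $\epsilon(\chi)$ gives
\[
\epsilon(\chi)\,\xi^{\approx}_{u}(1-s,\overline{\chi}) = \epsilon(\chi)\,\xi^{reg}_{u}(1-s,\overline{\chi}) + \epsilon(\chi)\epsilon(\overline{\chi})\,\xi^{reg}_{u}(s,\chi),
\]
so the desired identity $\xi^{\approx}_{u}(s,\chi) = \epsilon(\chi)\,\xi^{\approx}_{u}(1-s,\overline{\chi})$ is equivalent to the single scalar identity $\epsilon(\chi)\epsilon(\overline{\chi}) = 1$.

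The remaining task is to verify that identity, which is a standard fact about Gauss sums of primitive characters. I would use the two classical relations $|\tau(\chi)|^2 = q$ and $\overline{\tau(\chi)} = \chi(-1)\,\tau(\overline{\chi})$ to obtain $\tau(\chi)\tau(\overline{\chi}) = \chi(-1)\,q$. Substituting this into
\[
\epsilon(\chi)\epsilon(\overline{\chi}) = \frac{\tau(\chi)\tau(\overline{\chi})}{i^{2\kappa(\chi)}\,q} = \frac{\chi(-1)}{(-1)^{\kappa(\chi)}}
\]
(using that $\overline{\chi}$ has the same parity as $\chi$, so $\kappa(\overline{\chi}) = \kappa(\chi)$) and then splitting into the even and odd cases shows the ratio equals $1$ in both.

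I do not expect any genuine obstacle here: the computation is a symmetry check that reduces everything to a well-known Gauss sum identity, and the definition of $\xi^{\approx}_{u}$ has essentially been engineered with exactly this cancellation in mind. If anywhere needs care, it is simply making the parity bookkeeping in the verification of $\epsilon(\chi)\epsilon(\overline{\chi}) = 1$ explicit, to confirm both the even case ($\kappa = 0$, $\chi(-1) = 1$) and the odd case ($\kappa = 1$, $\chi(-1) = -1$) produce the value $+1$.
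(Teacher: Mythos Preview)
Your proposal is correct and matches the paper's own proof essentially line for line: both reduce the functional equation to the identity $\epsilon(\chi)\epsilon(\overline{\chi}) = 1$ and then verify that identity via $|\tau(\chi)|^2 = q$, $\overline{\tau(\chi)} = \chi(-1)\tau(\overline{\chi})$, and $\kappa(\overline{\chi}) = \kappa(\chi)$. The only cosmetic difference is that the paper establishes $\epsilon(\chi)\epsilon(\overline{\chi}) = 1$ first and then plugs in, whereas you first reduce and then verify.
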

\begin{proof}
    Note that for primitive character modulo $q$,
    \begin{align*}
        \epsilon(\chi) \epsilon(\overline{\chi}) &= \frac{\tau(\chi)\overline{\tau(\chi)}\chi(-1)}{i^{\kappa(\chi)}i^{\kappa(\overline{\chi})}\sqrt{q}\sqrt{q}}\\ &= \frac{\lvert \tau(\chi)\rvert ^ 2 \chi(-1)}{i^{2\kappa(\chi)}q} = \frac{\chi(-1)}{(-1)^{\kappa(\chi)}}\\ &= 1. 
    \end{align*}
    Therefore $\xi_u^\approx$ indeed satisfies the functional equation
\begin{align*}
    \epsilon(\chi)\xi_u^\approx(1-s,\overline{\chi})
    &= \epsilon(\chi)\xi^{reg}_{u}(1-s,\overline{\chi})+\epsilon(\chi)\epsilon(\overline{\chi})\xi^{reg}_{u}(s,\chi) \\
    &= \epsilon(\chi)\xi^{reg}_{u}(1-s,\overline{\chi})+\xi^{reg}_{u}(s,\chi) \\
    &= \xi_u^\approx(s,\chi).
\end{align*}

\end{proof}

We finish by defining our approximation for $L(s,\chi)$:
\begin{equation}
    L_u^\approx (s, \chi) = \frac{\xi_u^\approx(s,\chi)}{g(s,\chi)}. \label{eq:1}
\end{equation}

% Explain where the principal part comes from
%Tables with stuff
%Try to look for zeros along the critical strip
%Say A little bit about how I got the numbers
%Numerical tables and stuff

%\textit{\textbf{Table 1:} For the Dirichlet characters represented as $\chi_5(4,\cdot)$, $\chi_7(6,\cdot)$, and $\chi_8(5,\cdot)$ on the $L$-functions and modular forms database (LMFDB) \cite{lmfdb}, I do the following for $L^\approx_u(s,\chi)$ at $1/2 + 8i$, $1/2 + 9i$, ..., $1/2 + 12i$: I first calculate $L^\approx_u(s,\chi)$ and then I calculate the error $|L^\approx_u(s,\chi) - L(s,\chi)|$ for $u=5, 7, \text{and } 11$. Note that all three characters are primitive characters.}

\section{Main Results}
The main theorem and its corollary we prove in this paper are the following:
\begin{thm}
\label{thm1}
    Let $\chi$ be a fixed primitive Dirichlet character of conductor $q$. Fix $s_0 \in \mathbb{C}$ and let $A_u$ be the set of all positive integers with at least one prime factor greater than u. Then
    \begin{equation}
    \xi(s_0, \chi) - \xi^{\approx}_u(s_0,\chi) = \sum_{n \in A_u} J(n)
    \end{equation}
where
\begin{align}
    J(n) &= \chi(n) n^{\kappa(\chi)} \left( \frac{q}{n^2 \pi} \right)^{\frac{\kappa(\chi) + s_0}{2}}  \Gamma\left( \frac{\kappa(\chi) + s_0}{2}, \frac{n^2 \pi}{q} \right) \\
    &+ \epsilon(\chi) \overline{\chi}(n) n^{\kappa(\chi)} \left( \frac{q}{n^2 \pi} \right)^{\frac{\kappa(\chi) + 1 - s_0}{2}}  \Gamma\left( \frac{\kappa(\chi) + 1 - s_0}{2}, \frac{n^2 \pi}{q} \right).
\end{align}
and where
\begin{equation*}
        \Gamma(z,x) = \int_x^\infty e^{-t} t^{z-1} dt.
    \end{equation*}
\end{thm}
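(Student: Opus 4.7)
The plan mirrors the two-stage structure announced in Sections 3.2 and 3.3: first derive the contour-integral representation of the error stated in the introduction, then evaluate that integral via Mellin calculus to obtain the sum over $n \in A_u$.

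\emph{Stage 1 --- integral formula.} Fix $\sigma > \max(1, Re(s_0), 1 - Re(s_0))$ and consider the integrand
\[
F(s) \;=\; \frac{\xi(s,\chi) - \xi_u(s,\chi)}{s - s_0} \;+\; \epsilon(\chi)\, \frac{\xi(s,\overline{\chi}) - \xi_u(s,\overline{\chi})}{s - (1-s_0)},
\]
using $g(s,\chi) = g(s,\overline{\chi})$ to absorb the archimedean factor into the completed $L$-functions. By Stirling, $F$ decays exponentially on vertical lines, so $\tfrac{1}{2\pi i}\int_{Re(s)=\sigma} F(s)\,ds$ converges absolutely. I would then shift the contour across the strip $1-\sigma < Re(s) < \sigma$, picking up residues at the Cauchy poles $s_0$ and $1-s_0$ and at every pole of $\xi_u(\cdot,\chi)$ and $\xi_u(\cdot,\overline{\chi})$ in the strip; by the explicit principal-part formula from the preceding Lemma, the latter collapse to $\xi_u^{pp}(s_0,\chi) + \epsilon(\chi)\,\xi_u^{pp}(1-s_0,\overline{\chi})$. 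Reflecting the shifted contour at $Re(s)=1-\sigma$ back to $Re(s)=\sigma$ through the substitution $s\mapsto 1-s$, and applying the functional equation $\xi(s,\chi) = \epsilon(\chi)\xi(1-s,\overline{\chi})$ to cancel the duplicated $\xi$ contributions, gives the integral formula
\[
\xi(s_0,\chi) - \xi_u^{\approx}(s_0,\chi) \;=\; \frac{1}{2\pi i}\int_{Re(s)=\sigma} F(s)\,ds
\]
upon invoking the defining identity $\xi_u^{\approx} = \xi_u^{\mathrm{reg}} + \epsilon(\chi)\,\xi_u^{\mathrm{reg}}(1-\cdot,\overline{\chi})$.

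\emph{Stage 2 --- incomplete gamma evaluation.} Since $\sigma > 1$, expand $L(s,\chi) - L_u(s,\chi) = \sum_{n\in A_u} \chi(n) n^{-s}$ absolutely on the contour, and similarly for the $\overline{\chi}$ term. Exponential vertical decay of $g$ justifies interchanging summation and integration by Fubini. For each $n$, use the representation
\[
\frac{g(s,\chi)}{n^s} \;=\; 2 n^{\kappa(\chi)} \int_0^\infty e^{-\pi n^2 x^2 / q}\, x^{s + \kappa(\chi) - 1}\, dx,
\]
obtained from the substitution $y = nx$ (after $t = \pi y^2 / q$) in the integral representation of $\Gamma((s+\kappa(\chi))/2)$. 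Insert this and swap the order of integration; the inner $s$-integral is the Perron-type kernel
\[
\frac{1}{2\pi i}\int_{Re(s)=\sigma} \frac{x^s}{s - s_0}\,ds \;=\; x^{s_0}\cdot\mathbf{1}_{[1,\infty)}(x),
\]
obtained by closing to the left for $x > 1$ and to the right for $x < 1$. The outer integral $\int_1^\infty e^{-\pi n^2 x^2/q}\, x^{s_0+\kappa(\chi)-1}\,dx$ then becomes $\tfrac{1}{2}\bigl(q/(\pi n^2)\bigr)^{(s_0+\kappa(\chi))/2}\,\Gamma((s_0+\kappa(\chi))/2,\,\pi n^2/q)$ via $t = \pi n^2 x^2/q$, producing the first summand of $J(n)$ together with the $\chi(n)$ and $2 n^{\kappa(\chi)}$ prefactors. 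The $\overline{\chi}$ piece yields the second summand identically with $s_0 \mapsto 1-s_0$. Summing over $n \in A_u$ completes the theorem.

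\emph{Main obstacle.} The bulk of the work lies in Stage 1: the contour shift and functional-equation symmetrization must reproduce precisely the combination defining $\xi_u^{\approx}$, and in particular one has to show that the infinitely many residues at the imaginary-axis $L_u$-poles, together with the trivial $\Gamma$-poles, assemble convergently into the two $\xi_u^{pp}$ contributions --- which is exactly what the explicit form of $\xi_u^{pp}$ in the preceding Lemma is designed to guarantee. Stage 2 is then routine Mellin calculus, whose only subtle point is justifying the Fubini interchange, which follows from absolute convergence of the Dirichlet tail together with the exponential vertical decay of $g$.
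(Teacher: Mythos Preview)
Your two-stage plan matches the paper's structure, and Stage~2 is correct but genuinely different from what the paper does. The paper evaluates each $J_1(n)$ by shifting the $s$-contour to the left, summing the residues at the $\Gamma$-poles $s=-2h-\kappa(\chi)$ together with the Cauchy pole at $s_0$, and then recognising the resulting series as $\Gamma\bigl(\tfrac{\kappa+s_0}{2}\bigr)-\Gamma\bigl(\tfrac{\kappa+s_0}{2},\tfrac{n^2\pi}{q}\bigr)$ via the identity $\Gamma(a)-\Gamma(a,x)=\sum_{h\ge 0}(-1)^h x^{a+h}/(h!(a+h))$. Your route through the integral representation of $\Gamma$ and the Perron kernel $\tfrac{1}{2\pi i}\int_{(\sigma)} x^s(s-s_0)^{-1}\,ds=x^{s_0}\mathbf{1}_{x>1}$ is cleaner: it reaches the incomplete gamma integral in one substitution and avoids the series-recognition step entirely. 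Both are valid; yours is arguably more transparent.

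Your Stage~1 sketch, however, has a gap at exactly the spot you flag as the obstacle. After reflecting the contour at $Re(s)=1-\sigma$ via $s\mapsto 1-s$, the $\xi$-pieces do cancel by the functional equation, but the $\xi_u$-pieces do not: $\xi_u$ has no functional symmetry, and the reflected integral leaves behind terms of the form $\xi_u(1-w,\chi)/(w-(1-s_0))$ and $\epsilon(\chi)\xi_u(1-w,\overline{\chi})/(w-s_0)$ on $Re(w)=\sigma$, which you have not disposed of. The paper circumvents this by never reflecting: it works with a rectangle $[-\lambda,\,1+\lambda]\times[-T,T]$, uses the principal-part bound to kill $E(u,s_0,T,\lambda)$ as the rectangle exhausts $\mathbb{C}$, and then shows directly that the left vertical contribution $I_{\mathscr{L}}$ vanishes as $\lambda\to\infty$ because $|L_u(-\lambda+it,\chi)|\ll\prod_{p\le u}p^{-\lambda}$. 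If you want to salvage your reflection argument, you would need either that same far-left decay estimate for the residual $\xi_u(1-w,\cdot)$ terms (which amounts to the paper's step in disguise) or a separate argument that those reflected integrals cancel against something else.
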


\begin{cor}
\label{cor}
Let $\chi$ be a fixed primitive Dirichlet character of conductor $q$ and fix $s_0 \in \mathbb{C}$. Then as $u \rightarrow \infty$,
\begin{equation*}
    |\xi(s_0, \chi) - \xi^{\approx}_u(s_0,\chi)| \ll \begin{cases}
        \left(\frac{q}{\pi}\right) \Gamma \left(0, \frac{\pi u^2}{q}\right) & \text{if } \kappa(\chi) = 1\\
        \sqrt{\frac{q}{\pi}}\Gamma\left( -\frac{1}{2}, \frac{\pi u^2}{q} \right) & \text{if } \kappa(\chi) = 0.
    \end{cases}
\end{equation*}
\end{cor}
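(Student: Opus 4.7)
The plan is to apply Theorem~\ref{thm1}, pass to absolute values, use the standard asymptotic of the incomplete Gamma function to collapse $|J(n)|$ into a single scalar expression, and then compare the resulting tail sum over $A_u$ to an integral that evaluates back to an incomplete Gamma function via a change of variables.

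By Theorem~\ref{thm1} it suffices to bound $\sum_{n \in A_u}|J(n)|$. Writing $s_0 = \sigma_0 + it_0$ and using $|\chi(n)|,|\overline{\chi}(n)| \le 1$ together with $|\epsilon(\chi)| = 1$ (which holds since $\chi$ is primitive, so $|\tau(\chi)| = \sqrt{q}$), the modulus of each of the two summands of $J(n)$ is bounded by $n^{\kappa}\bigl(\tfrac{q}{n^2\pi}\bigr)^{(\kappa + \sigma_0)/2}$ times the modulus of an incomplete Gamma value whose second argument is $x = \pi n^2/q$. I would then invoke the classical asymptotic $\Gamma(z,x) = x^{z-1}e^{-x}(1+O(1/x))$ as $x \to \infty$, valid for each fixed complex $z$ and obtainable by a single integration by parts. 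Substituting this into each summand causes the $n$-dependent power $n^{2-2\kappa-2\mathrm{Re}(z)}$ arising from $x^{z-1}$ to cancel precisely against the explicit prefactor $(q/(n^2\pi))^{(\kappa + \mathrm{Re}(z))/2}$, regardless of whether $z = (\kappa+s_0)/2$ or $z = (\kappa+1-s_0)/2$, and both contributions coalesce into
\begin{equation*}
    |J(n)| \;\ll\; n^{\kappa - 2} \cdot \frac{q}{\pi} \cdot e^{-\pi n^2 / q}
\end{equation*}
for $n$ sufficiently large, with implied constant depending only on $s_0$ and $\chi$.

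Every $n \in A_u$ has a prime factor exceeding $u$ and hence satisfies $n > u$, so $\sum_{n \in A_u}|J(n)| \le \sum_{n > u}|J(n)|$. Since $s \mapsto s^{\kappa - 2}e^{-\pi s^2/q}$ is eventually monotone decreasing, standard integral comparison yields a bound by $\int_u^\infty s^{\kappa-2} e^{-\pi s^2/q}\,ds$. The clinching step is the substitution $t = \pi s^2/q$, which converts this integral directly into a multiple of $\Gamma\!\bigl(\tfrac{\kappa-1}{2},\, \pi u^2/q\bigr)$: one gets $\tfrac12\,\Gamma(0,\pi u^2/q)$ when $\kappa = 1$ and $\tfrac12\sqrt{\pi/q}\,\Gamma(-\tfrac12, \pi u^2/q)$ when $\kappa = 0$. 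Multiplying by the retained prefactor $q/\pi$ reproduces the two cases of the stated bound exactly.

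The only mildly subtle point is confirming that the direct term and its functional-equation companion in $J(n)$ produce asymptotics of the same order, so that a single incomplete Gamma function captures both; this works cleanly because the cancellation described above is insensitive to the choice between $z = (\kappa+s_0)/2$ and $z = (\kappa+1-s_0)/2$. Everything else — the integration-by-parts derivation of the Gamma asymptotic, the integral comparison, and the change of variables — is routine bookkeeping.
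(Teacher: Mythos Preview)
Your proposal is correct and follows essentially the same route as the paper: start from Theorem~\ref{thm1}, apply the large-$x$ asymptotic $\Gamma(z,x)\sim x^{z-1}e^{-x}$ to collapse both summands of $J(n)$ to a constant times $(q/\pi)\,n^{\kappa-2}e^{-\pi n^2/q}$, majorize the sum over $A_u$ by the sum (hence integral) over $n>u$, and evaluate via $t=\pi s^2/q$ to recover $\Gamma\!\bigl(\tfrac{\kappa-1}{2},\pi u^2/q\bigr)$. Your derivation of the Gamma asymptotic via integration by parts is cleaner than the paper's appeal to a Mathematica expansion; one small slip is that the $n$-power coming from $x^{z-1}$ is $n^{2\mathrm{Re}(z)-2}$ and the prefactor exponent is $\mathrm{Re}(z)$ itself (not $(\kappa+\mathrm{Re}(z))/2$), but the cancellation you claim is nonetheless correct and the argument goes through unchanged.
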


We also have the following intermediary theorem we prove in Section 3.2:\\
\textbf{Theorem 2.} \textit{Fix $\sigma>1$. We have}
\begin{align*}
   \xi(s_0, \chi) - \xi^{\approx}_u(s_0,\chi) = \frac{1}{2 \pi i} \int_{Re(s) = \sigma} g(s,\chi) \left( \frac{L(s,\chi) - L_u(s,\chi)}{s - s_0} + \epsilon(\chi) \frac{ L(s,\overline{\chi}) - L_u(s,\overline{\chi})}{s - (1 - s_0)} \right) ds.
    \end{align*}

\subsection{Bounding the principal part}

In this section we will will bound the principal part. However we will need to use the Stirling approximation:
\begin{lem}
\label{lem:4}
\begin{itemize}
\item[i.]  $\Gamma(z) \sim e^{-z} z^{z-\frac{1}{2}} \sqrt{2 \pi} \left(1 + \frac{1}{12z} + O\left( \frac{1}{z^2} \right)\right)$
\item [ii.]  $\left| \Gamma(z) \right| \sim \sqrt{2 \pi} e^{- \mathfrak{Re}(z)} \left| z \right| ^{\mathfrak{Re}(z) - \frac{1}{2}} e^{- \mathfrak{Im}(z) \arg(z)}$
\item [iii.] If  $\mathfrak{Im}(z) \rightarrow \infty$, then $\left| \Gamma(z) \right| \sim c \left| z \right| ^{\mathfrak{Re}(z) - \frac{1}{2}} e^{-\left| z \right| \frac{\pi}{2}}$
\item [iv.] Let $x \ge 1, x,y\in\mathbb{R}$, then $\left| \frac{\Gamma(x+iy)}{\Gamma(x)} \right| \ll \exp \{ - \frac{3}{8} \min \left( \left| y \right|, \frac{y^2}{x} \right) \}.$
\end{itemize}
\end{lem}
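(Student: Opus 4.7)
The plan is to derive all four parts from the classical asymptotic expansion of $\log \Gamma(z)$ obtained via Binet's second formula. Specifically, I would start from the Stirling series
\[
\log \Gamma(z) = (z - \tfrac{1}{2}) \log z - z + \tfrac{1}{2}\log(2\pi) + \sum_{k=1}^{N}\frac{B_{2k}}{2k(2k-1) z^{2k-1}} + O_N(|z|^{-2N-1}),
\]
valid uniformly for $|\arg z| \le \pi - \delta$, which is obtained by substituting the integral representation $\log\Gamma(z) = (z-\tfrac12)\log z - z + \tfrac12\log(2\pi) + 2\int_0^\infty \arctan(t/z)/(e^{2\pi t}-1)\,dt$ and expanding $\arctan(t/z)$ as a Taylor series in $1/z$. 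Exponentiating and retaining the $k=1$ term gives part (i) directly.

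For part (ii), I apply (i) and take moduli. Writing $z = re^{i\theta}$ with $\theta = \arg z$, we have $|z^{z-1/2}| = \exp(\mathrm{Re}((z-\tfrac12)\log z)) = r^{\mathrm{Re}(z)-1/2}\,e^{-\mathrm{Im}(z)\,\theta}$, and $|e^{-z}| = e^{-\mathrm{Re}(z)}$; combining these with $|\sqrt{2\pi}| = \sqrt{2\pi}$ yields the asymptotic in (ii). Part (iii) is then an immediate specialization: when $\mathrm{Im}(z) \to \infty$ with $\mathrm{Re}(z)$ bounded, $\arg z \to \pi/2$ so $\mathrm{Im}(z)\arg(z) \sim |z|\,\pi/2$, and the factor $e^{-\mathrm{Re}(z)}$ is absorbed into the implicit constant $c$.

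The real work is in part (iv). Starting from (i), I would write, for $x \ge 1$ and $y \in \mathbb{R}$,
\[
\log\!\left|\frac{\Gamma(x+iy)}{\Gamma(x)}\right| = \tfrac{x-1/2}{2}\log\!\Bigl(1 + \tfrac{y^2}{x^2}\Bigr) - y\arctan\!\Bigl(\tfrac{y}{x}\Bigr) + O(1/x),
\]
and then split into the regimes $|y| \le x$ and $|y| > x$. In the first regime, the elementary bound $\log(1+t) \le t$ controls the first term by $y^2/(2x)$, while a lower bound of the form $\arctan(t) \ge \tfrac{3}{4}t$ for $|t|\le 1$ controls the second term from below by $\tfrac{3}{4}y^2/x$, giving a net estimate $\le -cy^2/x$. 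In the second regime, $|\arctan(y/x)| \ge \pi/4$, so the second term dominates with a contribution $\le -\pi|y|/4$, while the first term is at most $(x-\tfrac12)\log(\sqrt{2}\,|y|/x)$, which is negligible compared to $|y|$.

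The main obstacle is matching the numerical constants in the two regimes cleanly to obtain the exponent $3/8$ as stated; this requires sharp numerical inequalities for $\arctan(t)$ and $\log(1+t)$ in the transition zone $|y| \approx x$. One must also verify that the Stirling remainder $O(1/x)$ is harmlessly absorbed by the $\ll$ constant, which is why the hypothesis $x \ge 1$ suffices rather than requiring $x$ large.
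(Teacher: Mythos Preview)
Your proposal is substantially more than what the paper itself does: the paper's entire proof of this lemma is a one-line citation, stating that these facts are taken from Nastasescu and Zaharescu's paper. No derivation is given there at all. So you have supplied an actual argument where the paper defers to the literature.

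On the merits, your outline is sound. Parts (i)--(iii) are exactly the standard route: start from the Stirling series for $\log\Gamma(z)$, exponentiate, then take absolute values using $|z^{z-1/2}| = |z|^{\mathrm{Re}(z)-1/2}e^{-\mathrm{Im}(z)\arg z}$; the specialization to (iii) is immediate once $\arg z\to\pi/2$. One small caveat: as stated in the paper, (iii) does not explicitly bound $\mathrm{Re}(z)$, and your absorption of $e^{-\mathrm{Re}(z)}$ into $c$ tacitly assumes it is. This is harmless since every application in the paper has $\mathrm{Re}(z)$ fixed or confined to a bounded interval, but it is worth flagging.

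For (iv), your decomposition
\[
\log\left|\frac{\Gamma(x+iy)}{\Gamma(x)}\right| \;=\; \tfrac{x-1/2}{2}\log\!\Bigl(1+\tfrac{y^2}{x^2}\Bigr) - y\arctan\!\Bigl(\tfrac{y}{x}\Bigr) + O(1/x)
\]
is correct, and the two-regime split $|y|\le x$ versus $|y|>x$ is the natural way to recover $\min(|y|,y^2/x)$ in the exponent. You are right that the only genuine obstacle is pinning down the constant $3/8$: with the crude bounds $\log(1+t^2)\le t^2$ and $\arctan t\ge(\pi/4)t$ on $[0,1]$ one gets an exponent around $\pi/4-1/2\approx 0.285$ rather than $3/8$, so a slightly sharper pair of elementary inequalities (or a different splitting point than $|y|=x$) is needed. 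Since the paper only uses (iv) qualitatively---to show decay of $|\Gamma|$ along vertical lines and to bound residues---any positive constant suffices, and your argument already delivers that.
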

\begin{proof}
    These are obtained from Nastasescu and Zaharescu's paper \cite{nastasescu}.
\end{proof}

The bound for the principal part that we will prove in this section is the following:
% lemma on the size of \xi^{reg}_{u}(s,\chi)
\begin{lem}
    \begin{equation}
        \left| \xi^{pp}_{u}(s,\chi)  \right| \le \frac{C'_u}{1 + \left| s \right|}
         \label{lem:7}
    \end{equation}
\end{lem}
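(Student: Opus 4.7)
My plan is to split $\xi^{pp}_{u}(s,\chi)$ into the three families of residue contributions listed in Lemma 1 and bound each separately, working on a region $\mathfrak{Re}(s)\ge\sigma>0$ (with $\sigma$ chosen so that $s$ stays bounded away from every singularity of $\xi^{pp}_{u}$, all of which lie on the non-positive real axis or on the imaginary axis). This is the region in which the estimate will later be applied to the integrand of Theorem 2. The Laurent tail from the pole at $s=0$ is a fixed finite expression $\sum_{k=1}^{m}a_{k}/s^{k}$, so for $|s|\ge\sigma$ it is trivially $O(1/|s|)\le C/(1+|s|)$.

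For the trivial-zero contribution $\sum_{h\ge 1}\frac{2(-1)^{h}\pi^{h}L_{u}(-2h-\kappa,\chi)}{q^{h}\,\Gamma(h+1)\,(s+2h+\kappa)}$, I first establish absolute convergence of the series of residues: $|L_{u}(-2h-\kappa,\chi)|\le\prod_{p\le u}(p^{2h+\kappa}-1)^{-1}$ decays rapidly in $h$, and the factorial denominator $\Gamma(h+1)$ decays even faster, so $\sum_{h}|\mathrm{Res}_{h}|=:B_{u}<\infty$. Since $\mathfrak{Re}(s+2h+\kappa)\ge 2h+\kappa+\sigma\ge\sigma$ and $|s+2h+\kappa|\ge|s|$ on our region, the ratio $(1+|s|)/|s+2h+\kappa|$ is uniformly bounded by a constant depending only on $\sigma$, which gives $(1+|s|)\sum_{h}|\mathrm{Res}_{h}|/|s+2h+\kappa|\le c_{\sigma}B_{u}$.

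The main piece to control is the Euler-product part, indexed by the imaginary poles $s_{j,h}=\frac{2\pi i}{\log p_{j}}\bigl(n_{j}/\varphi(q)+h\bigr)$. The key input is Lemma 4(iii): on the imaginary axis $|(q/\pi)^{(s_{j,h}+\kappa)/2}|$ is constant in $h$, while $|\Gamma((s_{j,h}+\kappa)/2)|\ll |h|^{(\kappa-1)/2}e^{-c_{j}|h|}$ with $c_{j}=\pi^{2}/(2\log p_{j})$, producing exponential decay of the residues in $|h|$. Assuming the generic situation that the poles of $L_{u}^{\setminus\{p_{j}\}}$ do not coincide with the points $s_{j,h}$ (otherwise $\xi_{u}$ would have a higher-order pole not covered by Lemma 1), the factor $L_{u}^{\setminus\{p_{j}\}}(s_{j,h},\chi)$ is uniformly bounded, yielding $|\mathrm{Res}_{j,h}|\ll e^{-c_{j}|h|}$ and absolute convergence of $\sum_{j,h}|\mathrm{Res}_{j,h}|$. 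To extract the $1/(1+|s|)$ decay on $\mathfrak{Re}(s)=\sigma$, I would split the inner sum into a "far" regime $|\mathfrak{Im}(s_{j,h})-\mathfrak{Im}(s)|>1$, where $|s-s_{j,h}|$ is itself large and summing against $1/|s-s_{j,h}|$ gives $O(1/|s|)$, and a "near" regime consisting of $O(\log p_{j})$ indices per prime, where $|h|\asymp|\mathfrak{Im}(s)|$ forces $|\mathrm{Res}_{j,h}|$ to be exponentially small in $|s|$. The main obstacle is precisely this balancing in the near regime: one must verify that the exponential smallness of $|\mathrm{Res}_{j,h}|$ when $\mathfrak{Im}(s_{j,h})\approx \mathfrak{Im}(s)$ dominates the weak distance bound $|s-s_{j,h}|\ge\sigma$ together with the factor $(1+|s|)$, uniformly in $u$.
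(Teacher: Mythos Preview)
Your approach mirrors the paper's: decompose $\xi^{pp}_{u}$ into the three contributions of Lemma~1, dispose of the pole at $s=0$ and the trivial-zero sum by direct absolute-convergence estimates, and for the Euler-product poles use Stirling (Lemma~\ref{lem:4}(iii)) to get exponential decay of the residues and then balance that decay against the denominators $|s-s_{j,h}|$. The paper carries out this last balancing via a three-range split $|s_{j,h}|<|s|/2$, $|s|/2\le |s_{j,h}|\le 2|s|$, $|s_{j,h}|>2|s|$, which is exactly the refinement your ``far'' regime tacitly requires: the condition $|\mathfrak{Im}(s_{j,h})-\mathfrak{Im}(s)|>1$ by itself only gives $|s-s_{j,h}|>1$, and one still has to subdivide to extract the $1/(1+|s|)$ factor.

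The substantive difference is the region of validity. You restrict to $\mathfrak{Re}(s)\ge\sigma>0$; the paper instead proves the bound for every $s$ satisfying the sparseness conditions \eqref{eq:5}--\eqref{eq:6}, which allow $s$ to range over the whole plane away from the pole locations. This matters for the intended application: in the proof of Theorem~\ref{thm2} the lemma is invoked on expanding rectangular contours $\mathcal{C}_\ell$ whose horizontal sides cross the imaginary axis and whose left vertical side lies far into the left half-plane, so a half-plane version does not cover that step. Your remark that the estimate is only needed on the line $\mathfrak{Re}(s)=\sigma$ overlooks this intermediate use.

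A minor point: the assertion that $L_{u}^{\setminus\{p_j\}}(s_{j,h},\chi)$ is \emph{uniformly} bounded is not literally correct, since the poles $s_{k,h'}$ for $k\neq j$ can come arbitrarily close to $s_{j,h}$ as $|h|$ grows. The paper does not address this carefully either; in both arguments the exponential Stirling decay of the $\Gamma$-factor absorbs any such sub-exponential growth, so the conclusion $|\mathrm{Res}_{j,h}|\ll e^{-C_0|s_{j,h}|}$ survives with a slightly smaller constant.
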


However first 

\begin{proof}
Fix $u \in \mathbb{Z}$ an integer $\ge 2$.

By the Prime Number Theorem, we can select $s$ such that
\begin{equation}
    \min_{1 \le j \le m} \min_{h \in \mathbb{Z}} \left| s - \frac{2 \pi i}{\log (p_j)} \left( \frac{n_j}{\varphi(q)} + h \right) \right| \gg \frac{1}{u} \label{eq:5}
\end{equation}
and
\begin{equation}
    \min_{h \in \mathbb{Z}_+} \left| s + 2h + \kappa(\chi) \right| \gg 1. \label{eq:6}
\end{equation}
This combined with $(iii)$ in Lemma \eqref{lem:4} and $(iv)$ in Lemma \eqref{lem:4} implies that

\begin{equation}
    \max_{1 \le j \le m} \max_{L \le \left| \frac{2 \pi}{\log (p_j)} \left( \frac{n_j}{\varphi(q)} + h \right) \right| \le L+1} \left| \frac{\xi_{u}^{\setminus\{p_j\}}\left(\frac{2\pi i}{\log (p_j)}\left(\frac{n_j}{\varphi (q)} + h\right),\chi\right)}{(s-\frac{2\pi i}{\log (p_j)}(\frac{n_j}{\varphi (q)}+h))\log(p_j)}
    \right| \le C_u e^{- C_0 L}.
\end{equation}
Then using notation $s_{j,h} \in \mathbb{C}$ for the pole of the form $\frac{2\pi i}{\log (p_j)}\left(\frac{n_j}{\varphi (q)} + h\right)$ for convenience, the part of the principal part arising from the poles $s_{j,h}$ is bounded by:
\begin{align*}
    \sum_{L=0}^{L=\frac{|s|}{2}} \sum_{L \le |s_{j,h}| \le L+1} \left| \frac{C_ue^{-C_0L}}{s-s_{j,h}} \right| &+ \sum_{L=\frac{|s|}{2}}^{L=2|s|} \sum_{L \le |s_{j,h}| \le L+1} \left| \frac{C_ue^{-C_0L}}{s-s_{j,h}} \right|\\ &+ \sum_{L=2|s|}^\infty \sum_{L \le |s_{j,h}| \le L+1} \left| \frac{C_ue^{-C_0L}}{s-s_{j,h}} \right|.
\end{align*}
For $L < \frac{|s|}{2}$, the denominator $|s-s_{j,h}|$ is greater than $c_{u,1}(|s|+1)$ for an appropriate constant $c_{u,1}$, hence
\begin{equation*}
\sum_{L=0}^{\frac{|s|}{2}} \sum_{L \le |s_{j,h}| \le L+1} \left| \frac{C_ue^{-C_0L}}{s-s_{j,h}} \right| \leq \frac{C_{u,1}}{1+|s|}.
\end{equation*}
For $\frac{|s|}{2} < L < 2|s|$, we have exponential decay when $|s|$ is large, and the sparseness condition in \eqref{eq:5} on $s$ guarantees that $\frac{1}{|s-s_{j,h}|}$ cannot get too large. Hence
\begin{equation*}
\sum_{L=\frac{|s|}{2}}^{L=2|s|} \sum_{L \le |s_{j,h}| \le L+1} \left| \frac{C_ue^{-C_0L}}{s-s_{j,h}} \right| \leq \frac{C_{u,2}}{1+|s|}.
\end{equation*}

For $L > 2|s|$, the exponential decay guarantees that 
\begin{equation*}
\sum_{L=2|s|}^\infty \sum_{L \le |s_{j,h}| \le L+1} \left| \frac{C_ue^{-C_0L}}{s-s_{j,h}} \right| \leq \frac{C_{u,3}}{1+|s|}.
\end{equation*}

Since the $\Gamma$-function rapidly decays, 
\begin{equation}
    \left|\sum_{h\in\mathbb{Z}_+}\frac{2(-1)^h\pi^h L_u(-2h-\kappa(\chi),\chi)}{q^h\Gamma(h+1)(s+2h+\kappa(\chi))} \right| \leq \frac{C_{\gamma}}{1 + |s|}.
\end{equation}
If the pole of $\xi_u$ at $s=0$ is of order $\ell$, the principal part at $s=0$ is of the form
\begin{equation*}
    \text{principal part from s=0} = \sum_{i=1}^{\ell} \frac{c_i}{s^{i}}
\end{equation*}
which is finite with each term asymptotically less than $\frac{1}{1+|s|}$, hence
\begin{equation*}
    \text{principal part from s=0} \le \frac{C_{u,0}}{1+|s|}.
\end{equation*}
Therefore there exists a constant $C'_u = C_{u,0} + C_{u,1} + C_{u,2} + C_{u,3} + C_{\gamma}$ such that
\begin{equation}
    \left| \xi_u^{pp}(s) \right| \le \frac{C'_u}{1+|s|}.
\end{equation}
\end{proof}

\subsection{Integral formula for the error}

Fix a complex number $s_0$ throughout the section. We will prove an integral formula for the error of the approximation
$$\xi (s_0, \chi) - \xi_u^\approx(s_0, \chi).$$
More specifically,  we will show the following exact expression:
\begin{thm}
\label{thm2}
Fix $\sigma>1$. We have
\begin{align*}
   \xi(s_0, \chi) - \xi^{\approx}_u(s_0,\chi) = \frac{1}{2 \pi i} \int_{Re(s) = \sigma} g(s,\chi) \left( \frac{L(s,\chi) - L_u(s,\chi)}{s - s_0} + \epsilon(\chi) \frac{ L(s,\overline{\chi}) - L_u(s,\overline{\chi})}{s - (1 - s_0)} \right) ds.
    \end{align*}
\end{thm}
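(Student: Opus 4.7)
The plan is to evaluate the right-hand side by splitting it into two pieces and shifting contours in each. Since $\chi(-1) = \overline\chi(-1)$ we have $\kappa(\chi) = \kappa(\overline\chi)$, so $g(s,\chi) = g(s,\overline\chi)$, and the right-hand side can be written as $I_1 + I_2$ with
\begin{align*}
I_1 &= \frac{1}{2\pi i}\int_{\Re(s)=\sigma}\frac{\xi(s,\chi) - \xi_u(s,\chi)}{s-s_0}\,ds, \\
I_2 &= \frac{\epsilon(\chi)}{2\pi i}\int_{\Re(s)=\sigma}\frac{\xi(s,\overline\chi) - \xi_u(s,\overline\chi)}{s-(1-s_0)}\,ds.
\end{align*}
Throughout I would take $\sigma$ large enough that $1-\sigma < \Re(s_0) < \sigma$, extending to general $s_0$ by analytic continuation.

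Next I would evaluate the $\xi_u$ contribution using the decomposition $\xi_u = \xi^{reg}_u + \xi^{pp}_u$. For the $\xi^{pp}_u$ piece in $I_1$, shift the contour from $\Re(s) = \sigma$ rightward to $\Re(s) = M \to \infty$; since $\xi^{pp}_u$ is holomorphic on $\Re(s) > 0$ and $s_0$ sits to the left of $\sigma$, no residues are crossed, and the bound $|\xi^{pp}_u(s,\chi)| \ll_u 1/(1+|s|)$ from the previous subsection makes the integrand $O(1/(M+|t|)^2)$ on $\Re(s) = M$, so this piece contributes zero. For the $\xi^{reg}_u$ piece, which is entire, shift the contour leftward to $\Re(s) = -\sigma' \to -\infty$; only the residue at $s=s_0$ is picked up, yielding $\xi^{reg}_u(s_0,\chi)$, while the contour integral over $\Re(s) = -\sigma'$ vanishes because $\xi_u(s,\chi)$ decays super-exponentially there (the reflection formula forces super-polynomial decay of $g(s,\chi)$, and $(1-\chi(p)/p^s)^{-1} = O(p^{-\sigma'})$ for each $p \le u$ forces exponential decay of $L_u(s,\chi)$) while $\xi^{pp}_u$ decays by the same bound. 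An identical argument applied to $I_2$ (with $\chi \to \overline\chi$ and $s_0 \to 1-s_0$) yields $\xi^{reg}_u(1-s_0,\overline\chi)$, so the total $\xi_u$ contribution to $I_1+I_2$ is $-\xi^{reg}_u(s_0,\chi) - \epsilon(\chi)\xi^{reg}_u(1-s_0,\overline\chi) = -\xi_u^{\approx}(s_0,\chi)$.

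For the $\xi$ contribution I would invoke the functional equation in the form $\epsilon(\chi)\xi(s,\overline\chi) = \xi(1-s,\chi)$, obtained from $\xi(s,\chi) = \epsilon(\chi)\xi(1-s,\overline\chi)$ by $s \mapsto 1-s$ and $\epsilon(\chi)\epsilon(\overline\chi) = 1$. Changing variables $s \mapsto 1-s$ in the $\xi$ part of $I_2$ turns it into
\[\frac{\epsilon(\chi)}{2\pi i}\int_{\Re(s)=\sigma}\frac{\xi(s,\overline\chi)}{s-(1-s_0)}\,ds = -\frac{1}{2\pi i}\int_{\Re(s)=1-\sigma}\frac{\xi(s,\chi)}{s-s_0}\,ds.\]
Adding this to the $\xi$ part of $I_1$, the total $\xi$ contribution becomes
\[\frac{1}{2\pi i}\left(\int_{\Re(s)=\sigma} - \int_{\Re(s)=1-\sigma}\right)\frac{\xi(s,\chi)}{s-s_0}\,ds = \xi(s_0,\chi),\]
by Cauchy on the rectangle with these vertical sides (horizontal pieces vanishing via exponential Stirling decay of $g$, since $\xi$ is entire). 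Combining all pieces yields $I_1 + I_2 = \xi(s_0,\chi) - \xi_u^{\approx}(s_0,\chi)$, as desired.

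The main technical obstacle will be verifying the decay of $\xi_u(s,\chi)$ on $\Re(s) = -\sigma'$ as $\sigma' \to \infty$, along contours chosen to dodge the infinitely many poles of $\xi_u$ (on the imaginary axis and the negative real axis) via a sparseness argument analogous to the one in the proof of the previous lemma. This requires combining the reflection-formula asymptotic $\Gamma((s+\kappa)/2) = \pi/[\Gamma(1-(s+\kappa)/2)\sin(\pi(s+\kappa)/2)]$, whose denominator grows super-polynomially for $\Re(s)\to -\infty$, with the exponential Euler-factor bound on $L_u$.
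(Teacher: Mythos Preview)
Your proof is correct and follows essentially the same strategy as the paper's: both use the bound $|\xi_u^{pp}(s,\chi)| \ll_u 1/(1+|s|)$ from the previous subsection to dispose of the principal-part contribution, the decay of $g(s,\chi)L_u(s,\chi)$ as $\Re(s)\to -\infty$ to isolate $\xi_u^{reg}(s_0,\chi)$ and $\epsilon(\chi)\xi_u^{reg}(1-s_0,\overline\chi)$, and the functional equation together with Stirling decay on horizontal segments to reduce the $\xi$-terms to a single Cauchy residue. The paper organizes the computation around explicit rectangles $\mathcal{C},\mathcal{C}'$ split at the critical line (and expands the $\xi_u^{pp}$-integral outward in all directions rather than shifting it purely to the right as you do), but the underlying estimates and ideas are the same.
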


\begin{proof}
Choose a simple closed curve $\mathcal{C}$ encompassing points $s_0$ and $1-s_0$ that doesn't pass through any poles of $\xi_u(s, \chi)$ on the real or imaginary axis. Then we define the following:

\begin{equation*}
    \mathcal{I}(u,s_0,\mathcal{C}) := \frac{1}{2 \pi i} \int_\mathcal{C} \frac{\xi^{pp}_u(s,\chi)}{s - s_0} + \frac{\xi^{pp}_u(s,\overline{\chi})}{s - (1 - s_0)} ds.
\end{equation*}

Next we construct a sequence of rectangular closed curves $\mathcal{C}_l$ for $l \in \mathbb{Z_+}$ each encompassing $\mathcal{C}$ while satisfying \eqref{eq:5} and \eqref{eq:6} and $\mathcal{C}_l \subseteq \mathcal{C}_{l+1}$ and $\bigcup \text{Int}(\mathcal{C}_l) = \mathbb{C}$. 

For a fixed $l \in \mathbb{Z_+}$, by Cauchy's Residue formula, if we let $\rho$ denote the poles on the real and complex axes lying in between $\mathcal{C}$ and $\mathcal{C}_l$ and Res($\rho$) denote the residue of $$\frac{\xi^{pp}_u(s,\chi)}{s - s_0} + \frac{\xi^{pp}_u(s,\overline{\chi})}{s - (1 - s_0)}$$ at each $\rho$, then

\begin{equation}
    \mathcal{I}(u,s_0,\mathcal{C}) := \frac{1}{2 \pi i} \int_{\mathcal{C}_l} \frac{\xi^{pp}_u(s,\chi)}{s - s_0} + \frac{\xi^{pp}_u(s,\overline{\chi})}{s - (1 - s_0)}ds - \sum_\rho \text{Res}(\rho).
\end{equation}

But $\mathcal{C}_l$ satisfying \eqref{eq:5} \& \eqref{eq:6} combined with \eqref{lem:7} means that $$\frac{\xi^{pp}_u(s,\chi)}{s - s_0} + \frac{\xi^{pp}_u(s,\overline{\chi})}{s - (1 - s_0)} \ll \frac{1}{\left| s \right| ^2}$$ which tells us that as $l \rightarrow \infty$, $\int_{\mathcal{C}_l} \frac{\xi^{pp}_u(s,\chi)}{s - s_0} + \frac{\xi^{pp}_u(s,\overline{\chi})}{s - (1 - s_0)} ds \rightarrow 0$. So as $l \rightarrow \infty$, 
\begin{equation*}
    \mathcal{I}(u,s_0,\mathcal{C}) = - \sum_\rho \text{Res}(\rho)
\end{equation*}
with $\rho$ denoting the poles on the real and complex axis lying outside of $\mathcal{C}$. Note that here 
\begin{equation*}
\sum_\rho \text{Res}(\rho) = \lim_{T \rightarrow \infty} \sum_{\left|\rho\right| \le T} \text{Res}(\rho).
\end{equation*}

For $T, \lambda, \sigma \in \mathbb{R}$, let $\mathcal{C}$ and $\mathcal{C'}$ denote the counterclockwise rectangular contours with vertices $\sigma - i T, \sigma + i T, -\lambda + i T, -\lambda - i T$ and $1 + \lambda - i T, 1 + \lambda + i T, -\lambda + i T, -\lambda - i T$ respectively. Choose $T>1$, $\lambda>1$, and $\sigma>1$ such that $s_0$ is enclosed inside both $\mathcal{C}$ and $\mathcal{C'}$.

By the Cauchy integral formula,
\begin{equation}
    \xi^{reg}_{u}(s_0,\chi) = \frac{1}{2 \pi i} \int_{\mathcal{C}'} \frac{\xi^{reg}_{u}(s,\chi)}{s-s_0}ds,
\end{equation}
therefore
\begin{align*}
    \xi^{\approx}_{u}(s,\chi) = \frac{1}{2 \pi i} \int_{\mathcal{C}'} \frac{\xi_u(s,\chi)}{s - s_0} + \frac{\xi_u(s,\overline{\chi})}{s - (1 - s_0)}ds - \frac{1}{2 \pi i} \int_{\mathcal{C}'} \frac{\xi^{pp}_u(s,\chi)}{s - s_0} + \frac{\xi^{pp}_u(s,\overline{\chi})}{s - (1 - s_0)}ds.
\end{align*}
Therefore, if we define $E(u, s_0, T, \lambda)$ to be 
\begin{equation*}
E(u, s_0, T, \lambda) =- \frac{1}{2 \pi i} \int_{\mathcal{C}'} \frac{\xi^{pp}_u(s,\chi)}{s - s_0} + \frac{\xi^{pp}_u(s,\overline{\chi})}{s - (1 - s_0)}ds = -\mathcal{I}(u, s_0, \mathcal{C}'),
\end{equation*}
we have
\begin{align*}
   \xi^{\approx}_{u}(s,\chi) &= \frac{1}{2 \pi i} \int_{\mathcal{C}'} \frac{\xi_u(s,\chi)}{s - s_0} + \frac{\xi_u(s,\overline{\chi})}{s - (1 - s_0)}ds + E(u, s_0, T, \lambda).\\
\end{align*}
As before, if $T, \lambda \rightarrow \infty$, then  $$\mathcal{I}(u,s_0,\mathcal{C}') = - \sum_\rho \text{Res}(\rho),$$ with $\rho$ denoting the poles on the real and complex axis lying outside of $\mathcal{C}'$, ie. poles of the form $-2h -\kappa(\chi)$ with $2h + \kappa(\chi) > \lambda$ or of the form $\frac{2\pi i}{\log (p_j)}\left(\frac{n_j}{\varphi (q)} + h\right)$ with $\frac{2\pi}{\log (p_j)}\left|\frac{n_j}{\varphi (q)} + h\right| > T$. 

Since $\text{Res}(\rho) \ll \frac{1}{\left| s \right| ^2}$ for every $\rho$ by Lemma \eqref{lem:7}, this means that as $T, \lambda \rightarrow \infty$, $E(u, s_0, T, \lambda)\rightarrow 0$. \\

To get the expression for the error of the approximation, we use the functional equation $\xi(s_0, \chi) = \epsilon(\chi) \xi(1 - s_0, \overline{\chi})$ which gives 
\begin{equation*}
\xi(s_0, \chi) = \frac{1}{2}(\xi(s_0, \chi) + \epsilon(\chi) \xi(1 - s_0, \overline{\chi})). 
\end{equation*}
Using Cauchy's Residue Theorem, we find:

\begin{equation*}
    \xi(s_0, \chi) = \frac{1}{2 \pi i} \int_{\mathcal{C}'} \frac{1}{2} \left(\frac{\xi(s,\chi)}{s - s_0} \right) ds + \frac{1}{2 \pi i} \int_{\mathcal{C}'} \frac{1}{2} \left(\frac{\epsilon(\chi) \xi(s,\overline{\chi})}{s - (1 - s_0)} \right) ds.
\end{equation*}
Let us define the function:
\begin{align*}
    I(u, s_0, T, \lambda) &= \frac{1}{2 \pi i} \int_{\mathcal{C}'} \frac{1}{2} \left(\frac{\xi(s,\chi)}{s - s_0} \right) ds + \frac{1}{2 \pi i} \int_{\mathcal{C}'} \frac{1}{2} \left(\frac{\epsilon(\chi) \xi(s,\overline{\chi})}{s - (1 - s_0)} \right) ds \\
    &- \frac{1}{2 \pi i} \int_{\mathcal{C}'} \frac{\xi_u(s,\chi)}{s - s_0} ds - \frac{1}{2 \pi i} \int_{\mathcal{C}'} \frac{\epsilon(\chi) \xi_u(s,\overline{\chi})}{s - (1 - s_0)} ds.
\end{align*}

    Hence we can simplify to find $I(u, s_0, T, \lambda) - E(u, s_0, T, \lambda)$:
    \begin{align*}
    &= \xi(s_0, \chi) - \frac{1}{2 \pi i} \int_{\mathcal{C}'} \frac{\xi_u(s,\chi) - \xi^{pp}_u(s,\chi)}{s - s_0}  ds - \frac{1}{2 \pi i} \int_{\mathcal{C}'} \frac{\epsilon(\chi) (\xi_u(s,\overline{\chi}) - \xi^{pp}_u(s, \overline{\chi}))}{s - (1 - s_0)}  ds \\
    &= \xi(s_0, \chi) - (\xi^{reg}_u(s_0,\chi) + \epsilon(\chi) \xi^{reg}_u(1 - s_0, \overline{\chi})) \\
    &= \xi(s_0, \chi) - \xi^{\approx}_u(s_0,\chi).\\
\end{align*}

\begin{figure}[htp]
    \centering
    \begin{tikzpicture}
        \fill [orange!25] (4,4.7) rectangle (5,5);
        \fill [orange!25] (4,1.3) rectangle (5,1);
        \fill [orange!25] (4.7,0.7) rectangle (5.3,5.3);
    
        \fill [green!25] (4,5) rectangle (7,5.3);
        \fill [green!25] (4,1) rectangle (7,0.7);
        \fill [green!25] (6.7,0.7) rectangle (7.3,5.3);
    
        \fill [blue!25] (1,4.7) rectangle (4,5.3);
        \fill [blue!25] (1,0.7) rectangle (4,1.3);
        \fill [blue!25] (0.7,0.7) rectangle (1.3,5.3);

        \draw (1,1) rectangle (7,5);
        \draw [ultra thick, ->] (0,3) -- (8,3);
        \draw [ultra thick, ->] (3,0) -- (3,6);
        \draw (4,6) -- (4,0);
        \draw (5,5) -- (5,1);
        \filldraw (1,3) circle (2pt) node[anchor = north east] {$- \lambda$};
        \filldraw (3,3) circle (2pt) node[anchor = north east] {$O$};
        \filldraw (4,3) circle (2pt) node[anchor = north east] {$\frac{1}{2}$};
        \filldraw (5,3) circle (2pt) node[anchor = north west] {$\sigma$};
        \filldraw (7,3) circle (2pt) node[anchor = north west] {$\lambda + 1$};
        \filldraw (3,1) circle (2pt) node[anchor = north east] {$-iT$};
        \filldraw (3,5) circle (2pt) node[anchor = south east] {$iT$};
        
        \foreach \x in {1.25,1.5,1.75,2,2.25,2.5,2.75}
            \draw [thick, red] (\x cm,3.1) -- (\x cm,2.9);
        \foreach \y in {1.25,1.5,1.75,2,2.25,2.5,2.75,3.25,3.5,3.75,4,4.25,4.5,4.75}
            \draw [thick, red] (2.9,\y cm) -- (3.1,\y cm);

        \fill [orange!25] (8.75,3.8) rectangle (9.75,4.2);
        \fill [green!25] (8.75,3.3) rectangle (9.75,3.7);
        \fill [blue!25] (8.75,2.8) rectangle (10.75,3.2);

        \filldraw [red] (9,2) circle (2pt) node[anchor = west] { $  $ poles in $\mathcal{C}$ and $\mathcal{C}'$};
        \filldraw (9,4) circle (2pt) node[anchor = west] {$  $ $\mathcal{C}_{\mathscr{R}}$};
        \filldraw (9,3.5) circle (2pt) node[anchor = west] {$  $ $\mathcal{C}'_{\mathscr{R}}$};
        \filldraw (9,3) circle (2pt) node[anchor = west] {$  $ $\mathcal{C}'_{\mathscr{L}} =\mathcal{C}_{\mathscr{L}}$};
        
    \end{tikzpicture}
    \caption{The four parts of $\mathcal{C}$ and $\mathcal{C}'$. Not to scale.}
    \label{fig:1}
\end{figure}
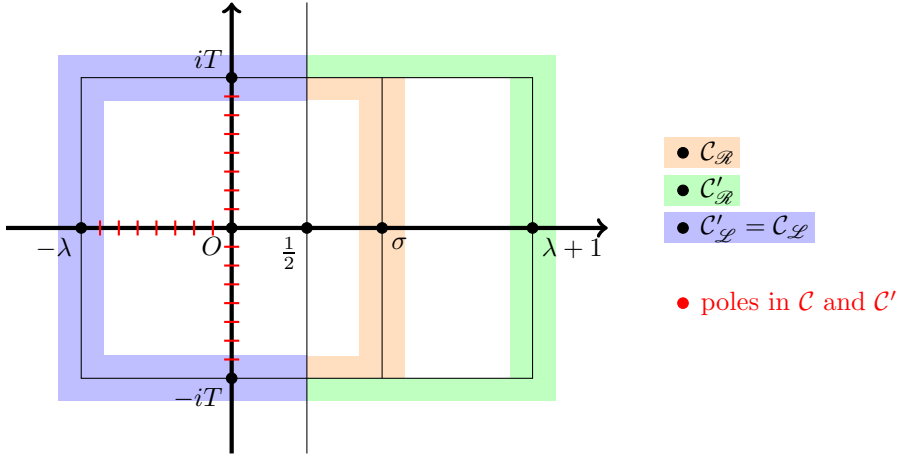

Now, let us split $\mathcal{C}$ and $\mathcal{C}'$ into left and right parts, divided at the critical line as shown in Figure \ref{fig:1}. Define $\mathcal{C}_{\mathscr{L}}$, $\mathcal{C}_{\mathscr{R}}$, $\mathcal{C}'_{\mathscr{L}}$, and $\mathcal{C}'_{\mathscr{R}}$ respectively. Note that  $\mathcal{C}'_{\mathscr{L}} =\mathcal{C}_{\mathscr{L}}$. Also note that for each point $z$ on $\mathcal{C}'_{\mathscr{R}}$, $1-z$ is on $\mathcal{C}'_{\mathscr{L}}$ and vice versa so by the functional equation $$\frac{1}{2 \pi i} \int_{\mathcal{C}'_{\mathscr{L}}} \frac{\xi(s,\chi)}{s - s_0} ds = \frac{1}{2 \pi i} \int_{\mathcal{C}'_{\mathscr{R}}} \frac{\epsilon(\chi)\xi(s,\overline{\chi})}{s - (1 - s_0)} ds$$ and $$\frac{1}{2 \pi i} \int_{\mathcal{C}'_{\mathscr{R}}} \frac{\xi(s,\chi)}{s - s_0} ds = \frac{1}{2 \pi i} \int_{\mathcal{C}'_{\mathscr{L}}} \frac{\epsilon(\chi)\xi(s,\overline{\chi})}{s - (1 - s_0)} ds.$$ This however does not apply to $\xi_u$ but since there are no poles of $\xi_u$ between $\mathcal{C}_{\mathscr{R}}$ and $\mathcal{C}'_{\mathscr{R}}$ by Cauchy Residue Theorem, 
$$\frac{1}{2 \pi i} \int_{\mathcal{C}'_{\mathscr{R}}} \frac{\xi_u(s,\chi)}{s - s_0} ds = \frac{1}{2 \pi i} \int_{\mathcal{C}_{\mathscr{R}}} \frac{\xi_u(s,\chi)}{s - s_0} ds$$ and $$\frac{1}{2 \pi i} \int_{\mathcal{C}'_{\mathscr{R}}} \frac{\epsilon(\chi)\xi(s,\overline{\chi})}{s - (1 - s_0)} ds = \frac{1}{2 \pi i} \int_{\mathcal{C}_{\mathscr{R}}} \frac{\epsilon(\chi)\xi(s,\overline{\chi})}{s - (1 - s_0)} ds.$$ Let us rewrite $I$ into two contours $$I(u, s_0, T, \lambda) = I_{\mathscr{L}}(u, s_0, T, \lambda) + I_{\mathscr{R}}(u, s_0, T, \lambda)$$ with
\begin{align*}
    I_{\mathscr{L}}(u, s_0, T, \lambda) &=- \frac{1}{2 \pi i} \int_{\mathcal{C}_{\mathscr{L}}} g(s,\chi)\frac{L_u(s,\chi)}{s - s_0} ds - \frac{1}{2 \pi i} \int_{\mathcal{C}_{\mathscr{L}}} g(s,\chi)\frac{\epsilon(\chi) L_u(s,\overline{\chi})}{s - (1 - s_0)} ds
\end{align*}
and
\begin{align*}
    I_{\mathscr{R}}(u, s_0, T, \lambda) &= \frac{1}{2 \pi i} \int_{\mathcal{C}_{\mathscr{R}}} g(s,\chi)\frac{L(s,\chi)}{s - s_0} ds + \frac{1}{2 \pi i} \int_{\mathcal{C}_{\mathscr{R}}} g(s,\chi)\frac{\epsilon(\chi) L(s,\overline{\chi})}{s - (1 - s_0)} ds \\
    &- \frac{1}{2 \pi i} \int_{\mathcal{C}_{\mathscr{R}}} g(s,\chi)\frac{L_u(s,\chi)}{s - s_0} ds - \frac{1}{2 \pi i} \int_{\mathcal{C}_{\mathscr{R}}} g(s,\chi)\frac{\epsilon(\chi) L_u(s,\overline{\chi})}{s - (1 - s_0)} ds.
\end{align*}

Letting $s = a + iT$, for $\mathcal{C}_{\mathscr{R}}$ we note by $(iv)$ in Lemma \eqref{lem:4} when $T \rightarrow \infty $, $ \left| \Gamma(a \pm iT) \right| \rightarrow 0.$\\

Now for $\mathcal{C}_{\mathscr{L}}$ note that as $T \rightarrow \infty$, by $(iii)$ in Lemma \eqref{lem:4} 
\begin{align*}
    \left| \Gamma \left( \frac{a + \kappa(\chi)}{2} + \frac{i T}{2} \right) \right| \sim C \left| \frac{a + \kappa(\chi)}{2} + \frac{i T}{2} \right|^{\frac{a + \kappa(\chi)}{2} - \frac{1}{2}} e^{-\left| \frac{a + \kappa(\chi)}{2} + \frac{i T}{2}\right| \frac{\pi}{2}}
\end{align*}
So since $a \in (-\lambda, \frac{1}{2})$ and $\kappa(\chi) = 0 \text{ or } 1$,  $\frac{a + \kappa(\chi)}{2} < 1$ so $ \frac{a + \kappa(\chi)}{2} - \frac{1}{2} < \frac{1}{2}$. Now we consider three cases:

$\textbf{Case 1:}$ If  $\frac{a + \kappa(\chi)}{2} = \frac{1}{2}$ then $$\left| \Gamma \left( \frac{a + \kappa(\chi)}{2} + \frac{i T}{2} \right) \right| \sim C \left| s \right|^{0} e^{-\left| \frac{a + \kappa(\chi)}{2} + \frac{i T}{2}\right| \frac{\pi}{2}}$$ which approaches $0$ as $T$ approaches $\infty$ because of the exponential decay. \\
$\textbf{Case 2:}$ If $\frac{a + \kappa(\chi)}{2} < \frac{1}{2}$ then $$\left| \Gamma \left( \frac{a + \kappa(\chi)}{2} + \frac{i T}{2} \right) \right| \sim C \left| s \right|^{-n} e^{-\left| \frac{a + \kappa(\chi)}{2} + \frac{i T}{2}\right| \frac{\pi}{2}}$$ which also approaches $0$ as $T$ approaches $\infty$ because of the exponential decay. \\
$\textbf{Case 3:}$ If $\frac{a + \kappa(\chi)}{2} > \frac{1}{2}$ then $$\left| \Gamma \left( \frac{a + \kappa(\chi)}{2} + \frac{i T}{2} \right) \right| \sim C \left| s \right|^{n} e^{-\left| \frac{a + \kappa(\chi)}{2} + \frac{i T}{2}\right| \frac{\pi}{2}}$$ with $n \in (0, \frac{1}{2})$ so we can use L'Hopital's rule to get $$C n \left| s \right|^{n - 1} \frac{4}{i \pi} e^{-\left| \frac{a + \kappa(\chi)}{2} + \frac{i T}{2}\right| \frac{\pi}{2}} $$ which, since $n - 1 < 0$, approaches $0$ as $T$ approaches $\infty$ because of the exponential decay. \\

Therefore for large $T$, the contribution of the horizontal segments is negligible when integrating over $\mathcal{C}_{\mathscr{L}}$ and $\mathcal{C}_{\mathscr{R}}$. So we find that:
\begin{align*}
    I_{\mathscr{R}}(u, s_0, T, \lambda) &= \frac{1}{2 \pi i} \int_{Re(s) = \sigma} g(s,\chi)\frac{L(s,\chi)}{s - s_0} ds + \frac{1}{2 \pi i} \int_{Re(s) = \sigma} g(s,\chi)\frac{\epsilon(\chi) L(s,\overline{\chi})}{s - (1 - s_0)} ds \\
    &- \frac{1}{2 \pi i} \int_{Re(s) = \sigma} g(s,\chi)\frac{L_u(s,\chi)}{s - s_0} ds - \frac{1}{2 \pi i} \int_{Re(s) = \sigma} g(s,\chi)\frac{\epsilon(\chi) L_u(s,\overline{\chi})}{s - (1 - s_0)} ds.
\end{align*}

Now note that $$\left| L_u(-\lambda + it, \chi) \right| = \prod_{p \le u} \left| \frac{1}{1 - \frac{\chi(p)}{p^{-\lambda + it}}} \right|.$$
We have 
\begin{align*}
\left| 1 - \chi(p)p^{-(-\lambda + it)} \right| &\ge \left| 1 - \left|\chi(p)p^{-(-\lambda + it)}\right| \right| = \left| 1 - \left|p^{-(-\lambda + it)}\right| \right| = \left| 1 - p^{\lambda} \right| \ge p^{\lambda} - 1 \sim p^{\lambda}.
\end{align*}

Therefore, $$\left| L_u(-\lambda + it, \chi) \right| \ll \prod_{p \le u} p^{-\lambda},$$ and hence as $T $ approaches $\infty$,

\begin{align*}
    I_{\mathscr{L}}(u, s_0, T, \lambda) \ll \left( \prod_{p \le u} p^{-\lambda} \right) \int_{-\infty}^{\infty} \left| g(-\lambda + it,\chi)\right| \left( \frac{1}{\left| -\lambda + it - s_0 \right|} + \frac{\left| \epsilon(\chi) \right|}{\left| -\lambda + it - (1 - s_0) \right|} \right) dt.
\end{align*}

Thus, as $\lambda, T \rightarrow \infty$, we have $I_{\mathscr{L}}(u, s_0, T, \lambda) \rightarrow 0$. \\

Recall 
\begin{align*}
\xi(s_0, \chi) - \xi^{\approx}_u(s_0,\chi) &= I(u, s_0, T, \lambda) - E(u, s_0, T, \lambda) \\
&= I_{\mathscr{L}}(u, s_0, T, \lambda) + I_{\mathscr{R}}(u, s_0, T, \lambda) - E(u, s_0, T, \lambda). 
\end{align*}
Since we found above that 
$$\lim_{T, \lambda \rightarrow \infty} I_{\mathscr{L}}(u, s_0, T, \lambda) = \lim_{T, \lambda \rightarrow \infty} E(u, s_0, T, \lambda) = 0,$$ we obtain the integral expression for the difference between the completed $L$-function and our approximation:
\begin{equation*}
    \xi(s_0, \chi) - \xi^{\approx}_u(s_0,\chi) = \frac{1}{2 \pi i} \int_{Re(s) = \sigma} g(s,\chi) \left( \frac{L(s,\chi) - L_u(s,\chi)}{s - s_0} + \epsilon(\chi) \frac{ L(s,\overline{\chi}) - L_u(s,\overline{\chi})}{s - (1 - s_0)} \right) ds.
\end{equation*}

% a lemma proving the exact expression on the difference between $\xi(s, \chi)$ and $\xi^{\approx}_{u}(s,\chi)$
\end{proof}

\subsection{Explicit formula for the error}

Fix a complex number $s_0$ throughout the section. In this section, we use the integral formula for $\xi_u(s_0, \chi) - \xi_u(s_0, \chi)$ from Theorem \ref{thm2} to prove the main result of Theorem \ref{thm1}. 

\begin{mydef}
        \label{def}
        The incomplete Gamma function $\Gamma (z,x)$ is a function defined for $\mathfrak{Re}(z)>0$ by
    \begin{equation*}
        \Gamma(z,x) = \int_x^\infty e^{-t} t^{z-1} dt.
    \end{equation*}
\end{mydef}

But the incomplete Gamma function can also be expressed as a series:
\begin{prop}
    For $a \neq 0, -1, -2, ...$
    \begin{equation*}
        \Gamma(a,x) = \Gamma(a) - \sum_{n = 0}^{\infty} \frac{(-1)^n x^{a + n}}{n! (a + n)}.
    \end{equation*}
\end{prop}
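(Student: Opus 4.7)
The plan is to use the identity $\Gamma(a) - \Gamma(a,x) = \int_0^x e^{-t} t^{a-1}\,dt$ (the lower incomplete Gamma function), expand $e^{-t}$ as a Maclaurin series, and integrate term by term. Explicitly, the splitting
$$\Gamma(a) - \Gamma(a,x) = \int_0^\infty e^{-t} t^{a-1}\,dt - \int_x^\infty e^{-t} t^{a-1}\,dt = \int_0^x e^{-t} t^{a-1}\,dt$$
reduces the claim (after rearrangement) to showing
$$\int_0^x e^{-t} t^{a-1}\,dt = \sum_{n=0}^\infty \frac{(-1)^n x^{a+n}}{n!\,(a+n)}.$$

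Next I would substitute the Maclaurin series $e^{-t} = \sum_{n=0}^\infty \frac{(-t)^n}{n!}$ into the integrand. The main (though still standard) obstacle is justifying the interchange of summation and integration, since the weight $t^{a-1}$ is singular at $t=0$. For this, observe that the partial sums $S_N(t) := \sum_{n=0}^N \frac{(-t)^n}{n!}$ converge uniformly to $e^{-t}$ on $[0,x]$ and are uniformly bounded by $e^{|x|}$, so $|S_N(t)\, t^{a-1}| \leq e^{|x|}\, t^{\mathfrak{Re}(a)-1}$, which is integrable on $(0,x]$ whenever $\mathfrak{Re}(a) > 0$. Dominated convergence then gives
$$\int_0^x e^{-t} t^{a-1}\,dt = \sum_{n=0}^\infty \frac{(-1)^n}{n!} \int_0^x t^{n+a-1}\,dt = \sum_{n=0}^\infty \frac{(-1)^n x^{n+a}}{n!\,(n+a)},$$
where the inner integrals evaluate cleanly because $n + a \neq 0$ by the hypothesis $a \notin \{0,-1,-2,\dots\}$.

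This establishes the identity on the half-plane $\mathfrak{Re}(a) > 0$, which is exactly the range in which the integral definition of $\Gamma(a,x)$ given in Definition \ref{def} makes sense. To cover all remaining $a \notin \{0,-1,-2,\dots\}$, one appeals to analytic continuation in $a$: the series on the right defines a meromorphic function of $a$ with simple poles precisely at the non-positive integers (the tail $\sum_{n \geq N} \frac{(-1)^n x^{a+n}}{n!(a+n)}$ converges uniformly on compact subsets avoiding those points), and $\Gamma(a) - \Gamma(a,x)$ admits the same meromorphic extension, so agreement on $\mathfrak{Re}(a) > 0$ forces agreement everywhere by the identity theorem. The real work is the dominated-convergence step; everything else is bookkeeping.
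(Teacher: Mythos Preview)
Your proof is correct. The paper does not actually prove this proposition; it simply cites page 910 of Gradshteyn--Ryzhik, \emph{Table of Integrals, Series, and Products}, as the source of the identity and moves on. Your argument---expanding $e^{-t}$ as a power series inside the lower incomplete gamma integral $\int_0^x e^{-t}t^{a-1}\,dt$, integrating term by term with a dominated-convergence justification on $\mathfrak{Re}(a)>0$, and then extending to all $a\notin\{0,-1,-2,\dots\}$ by analytic continuation---is the standard derivation and supplies exactly the detail the paper omits by citation.
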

This comes from page 910 of \textit{Table of Integrals, Series, and Products} \cite{integrals}.

Defining $A_u$ to be the set of natural numbers with at least one prime factor greater than $u$, we find that
\begin{equation*}
    L(s,\chi) - L_u(s,\chi) = \sum_{n \in A_u} \frac{\chi(n)}{n^s}.
\end{equation*}

So therefore
\begin{align*}
    \xi(s_0, \chi) - \xi^{\approx}_u(s_0,\chi) &=  \frac{1}{2 \pi i} \int_{Re(s) = \sigma} g(s,\chi) \left( \frac{\sum_{n \in A_u} \frac{\chi(n)}{n^s}}{s - s_0} + \epsilon(\chi) \frac{ \sum_{n \in A_u} \frac{\overline{\chi}(n)}{n^s} }{s - (1 - s_0)} \right) ds \\
    &= \sum_{n \in A_u} \frac{1}{2 \pi i} \int_{Re(s) = \sigma} g(s,\chi) \frac{\chi(n)}{n^s(s - s_0)} ds \\
    &+ \sum_{n \in A_u} \frac{1}{2 \pi i} \int_{Re(s) = \sigma} g(s,\chi) \frac{\epsilon(\chi) \overline{\chi}(n)}{n^s(s - 1 + s_0)} ds.
\end{align*}
Define the notation:

\begin{equation}
    J_1(n)= \frac{1}{2 \pi i} \int_{Re(s) = \sigma} g(s,\chi) \frac{\chi(n)}{n^s(s - s_0)} ds
\end{equation}
\begin{equation}
    J_2(n) = \frac{1}{2 \pi i} \int_{Re(s) = \sigma} g(s,\chi) \frac{\epsilon(\chi) \overline{\chi}(n)}{n^s(s - 1 + s_0)} ds.
\end{equation}
Now we work towards evaluating $J_1$. As with before, $s = \lambda + it$.

\begin{align*}
    J_1(n) &= \frac{1}{2 \pi i} \int_{Re(s) = \sigma} g(s,\chi) \frac{\chi(n)}{n^s(s - s_0)} ds \\
    &= \sum_{\rho \text{ zeros of } g(s , \chi)} \text{Res}(\rho) + g(s_0, \chi) \chi(n)n^{-s_0}\\
    &= 2 \chi(n) n^{\kappa(\chi)} \sum_{h = 0}^{\infty} \left( \frac{(-1)^h \pi^h n^{2h}}{q^h \Gamma(h + 1)} \frac{1}{-2h - \kappa(\chi) - s_0} \right) + g(s_0, \chi) \chi(n)n^{-s_0}\\
    &= - \chi(n) n^{\kappa(\chi)} \sum_{h = 0}^{\infty} \left( \frac{(-1)^h \pi^h n^{2h}}{q^h h!} \frac{1}{h + \frac{\kappa(\chi) + s_0}{2}} \right) + g(s_0, \chi) \chi(n)n^{-s_0}\\
    &= - \chi(n) n^{\kappa(\chi)} \left( \frac{q}{n^2 \pi} \right)^{\frac{\kappa(\chi) + s_0}{2}} \sum_{h = 0}^{\infty} \left( \frac{(-1)^h}{h!} \left( \frac{n^2 \pi}{q} \right)^{h + \frac{\kappa(\chi) + s_0}{2}}  \frac{1}{h + \frac{\kappa(\chi) + s_0}{2}} \right) + g(s_0, \chi) \chi(n)n^{-s_0}\\
    &= - \chi(n) n^{\kappa(\chi)} \left( \frac{q}{n^2 \pi} \right)^{\frac{\kappa(\chi) + s_0}{2}} \left( \Gamma \left( \frac{\kappa(\chi) + s_0}{2} \right) - \Gamma\left( \frac{\kappa(\chi) + s_0}{2}, \frac{n^2 \pi}{q} \right) \right) + g(s_0, \chi) \chi(n)n^{-s_0}.
\end{align*}
Note that $$n^{\kappa(\chi)} \cdot n^{-2 \left( \frac{\kappa(\chi) + s_0}{2} \right)} = n^{-s_0},$$ therefore $$g(s_0, \chi) \chi(n) n^{-s_0} = \left( \frac{q}{n^2 \pi} \right)^{\frac{\kappa(\chi) + s_0}{2}} \Gamma \left( \frac{\kappa(\chi) + s_0}{2} \right) \chi(n) n^{\kappa(\chi)}.$$ So we can utilize this to simplify $J_1$ to get
\begin{equation}
    J_1(n) = \chi(n) n^{\kappa(\chi)} \left( \frac{q}{n^2 \pi} \right)^{\frac{\kappa(\chi) + s_0}{2}}  \Gamma\left( \frac{\kappa(\chi) + s_0}{2}, \frac{n^2 \pi}{q} \right).
\end{equation}
Similarly, we can simplify $J_2$ to get
\begin{equation}
    J_2(n) = \epsilon(\chi) \overline{\chi}(n) n^{\kappa(\chi)} \left( \frac{q}{n^2 \pi} \right)^{\frac{\kappa(\chi) + 1 - s_0}{2}}  \Gamma\left( \frac{\kappa(\chi) + 1 - s_0}{2}, \frac{n^2 \pi}{q} \right).
\end{equation}

Therefore
\begin{align*}
    J_1(n) + J_2(n) &= \chi(n) n^{\kappa(\chi)} \left( \frac{q}{n^2 \pi} \right)^{\frac{\kappa(\chi) + s_0}{2}}  \Gamma\left( \frac{\kappa(\chi) + s_0}{2}, \frac{n^2 \pi}{q} \right) \\
    &+ \epsilon(\chi) \overline{\chi}(n) n^{\kappa(\chi)} \left( \frac{q}{n^2 \pi} \right)^{\frac{\kappa(\chi) + 1 - s_0}{2}}  \Gamma\left( \frac{\kappa(\chi) + 1 - s_0}{2}, \frac{n^2 \pi}{q} \right).
\end{align*}
This concludes the proof of Theorem \ref{thm1}.
% introduce J(n), but try to explicitly express it using the residue formula and then estimate

\subsection{Bounding the error}

Fix a complex number $s_0$ throughout the section. In this section, we use the explicit formula for the error to bound the error term.

From Mathematica, we can expand the Gamma factor in $J_1(n)$:
\begin{align}
    \Gamma\left( \frac{\kappa(\chi) + s_0}{2}, \frac{n^2 \pi}{q} \right) &= \exp \left( \frac{- \pi n^2}{q} + O \left[\frac{1}{n} \right]^2 \right) \label{bd1} \\
    & \times n^{\kappa(\chi) + s_0 - 2} \left(  \left( \frac{\pi}{q} \right)^{\frac{\kappa(\chi) + s_0}{2} - 1} + O \left[\frac{1}{n} \right]^2 \right). \label{bd2}
\end{align}
It is apparent that \ref{bd1} decays exponentially while \ref{bd2} grows in a polynomial rate, therefore the Gamma factor decays exponentially.
Similarly for the Gamma term in $J_2(n)$,
\begin{align*}
    \Gamma\left( \frac{\kappa(\chi) + 1 - s_0}{2}, \frac{n^2 \pi}{q} \right) &= \exp \left( \frac{- \pi n^2}{q} + O \left[\frac{1}{n} \right]^2 \right) \\
    & \times n^{\kappa(\chi) - 1 - s_0} \left( \left( \frac{\pi}{q} \right)^{\frac{\kappa(\chi) + 1 - s_0}{2} - 1} + O \left[\frac{1}{n} \right]^2 \right).
\end{align*}

Summing on $A_u$ is less than summing over all integers greater than $u$, so we can obtain an upper bound by summing
\begin{equation*}
    |\xi(s_0, \chi) - \xi^{\approx}_u(s_0,\chi)| \le \sum_{n=u}^\infty |J_1(n)| + |J_2(n)|.
\end{equation*}
This sum itself is bounded by integrating
\begin{equation*}
    |\xi(s_0, \chi) - \xi^{\approx}_u(s_0,\chi)| \le \int_{u}^\infty |J_1(x)|dx + \int_{u}^\infty |J_2(x)| dx.
\end{equation*}
Using the main terms, from our previous expansions of the Gamma factors, along with the fact that $|\chi(n)| \le 1$ always and $|\epsilon(\chi)| = 1$ for primitive characters,
\begin{align*}
    |\xi(s_0, \chi) - \xi^{\approx}_u(s_0,\chi)| &\ll \int_{u}^\infty x^{\kappa(\chi)} \left(\frac{q}{\pi}\right)^{\frac{\kappa(\chi)+s_0}{2}} x^{-\kappa(\chi)-s_0} \exp \left( \frac{- \pi x^2}{q} \right) x^{\kappa(\chi) + s_0 - 2} \left( \frac{\pi}{q} \right)^{\frac{\kappa(\chi) + s_0}{2} - 1} dx \\
    &+ \int_{u}^\infty x^{\kappa(\chi)} \left(\frac{q}{\pi}\right)^{\frac{\kappa(\chi)+1-s_0}{2}} x^{-\kappa(\chi)-1+s_0} \exp \left( \frac{- \pi x^2}{q} \right) x^{\kappa(\chi) - 1 - s_0} \left( \frac{\pi}{q} \right)^{\frac{\kappa(\chi) + 1 - s_0}{2} - 1} dx.
\end{align*}
This simplifies to
\begin{equation*}
    |\xi(s_0, \chi) - \xi^{\approx}_u(s_0,\chi)| \ll \int_{u}^\infty 2 \left(\frac{q}{\pi}\right) x^{\kappa(\chi) - 2} \exp \left( \frac{- \pi x^2}{q} \right) dx.
\end{equation*}
Computing this integral, we find that the error term is bounded by
\begin{equation*}
    |\xi(s_0, \chi) - \xi^{\approx}_u(s_0,\chi)| \ll \begin{cases}
        \left(\frac{q}{\pi}\right) \Gamma \left(0, \frac{\pi u^2}{q}\right) & \text{if } \kappa(\chi) = 1\\
        \sqrt{\frac{q}{\pi}}\Gamma\left( -\frac{1}{2}, \frac{\pi u^2}{q} \right) & \text{if } \kappa(\chi) = 0.
    \end{cases}
\end{equation*}
This concludes the proof of Corollary \ref{cor}.
%pg 942 of table of integrals series and products for series expansion of digamma.

\section{Acknowledgements}
The study resulting in this paper was assisted by a grant from the Baker Program in Undergraduate Research, which is administered by Northwestern University's Weinberg College of Arts and Sciences. However, the conclusions, opinions, and other statements in this paper are the author's and not necessarily those of the sponsoring institution.

I want to thank Professor Nastasescu for mentoring me and helping me throughout the entire process resulting in this paper. I could not be where I am without her guidance and mentorship.

\printbibliography[
heading=bibintoc,
title={Bibliography}
]

\end{document}